 \newtheorem{pro}{Proposition}[section]
 \newtheorem{lem}[pro]{Lemma}
 \newtheorem{rem}[pro]{Remark}
 \newtheorem{exa}[pro]{Example}
  \newtheorem{defi}[pro]{Definition}
 \newtheorem{theo}[pro]{Theorem}
 \newtheorem{thm}[pro]{Theorem}
 \newcommand{\B}{\mathbb B}
 \newcommand{\R}{\mathbb R}
 \newcommand{\C}{\mathbb C}
  \newcommand{\Pn}{\mathbb P^{n}}
   \newcommand{\G}{\mathbb G}
 \newcommand{\e}{\varepsilon}
 \newcommand{\f}{\varphi}
 \newcommand \mrm{\mathrm}
 \numberwithin{equation}{section}
\begin{document}
\title[Logarithmic Potentials]{ Projective Logarithmic Potentials}
\setcounter{tocdepth}{1}
\author{ S. Asserda, Fatima Z. Assila and  A. Zeriahi}

\address{Universit\'e Ibn Tofail,
K\'enitra Maroc \\} 
\email{$fati_zahra_91$@yahoo.fr} 

\address{Universit\'e Ibn Tofail,
K\'enitra Maroc \\} 
\email{asserda-said@univ-ibntofail.ac.ma}

\address{ IMT; UMR 5219 \\ \newline
Universit\'e de Toulouse; CNRS  \\  \newline
  Paul Sabatier,  F-31400 Toulouse, France} 
 \email{zeriahi@math.univ-toulouse.fr}

\maketitle

 \begin{center}
{\it A tribute to Professor Ahmed Intissar}
\end{center}

\date{\today \\
 The third author was partially supported by the ANR project GRACK}
 \begin{abstract} We study the projective logarithmic potential  $\G_\mu$ of a Probability measure $\mu$ on the complex projective space $\mathbb{P}^{n}$ equiped with the Fubini-Study metric $\Omega$. We prove that the Green operator $\G : \mu \longmapsto \G_\mu$ has strong regularizing  properties.
 
  It was shown by the second author in \cite{As17} that the range of the operator $\G$ is contained in  the (local) domain of definition of the complex Monge-Amp\`ere operator on $\mathbb P^n$. This result extend earlier results by Carlehed \cite{Carlehed99}.
  
   Here we will show that the complex  Monge-Amp\`ere measure 
 $(\omega + dd^c \G_\mu)^n$ of the logarithmic potential of $\mu$  is absolutely continuous with respect to the Lebesgue measure  on $\mathbb P^n$ {\it if and only if} the measure $\mu$  has no atoms. 
 Moreover when the measure $\mu$ has a "positive dimension", we give more precise results on regularity properties of the potential $\G_\mu$ in terms of the  dimension of $\mu$. 
 \end{abstract}
 
\tableofcontents
\section{Introduction}
 In Classical Potential Theory (CPT) various potentials associated to Borel measures in the euclidean space $\R^N$ were introduced. They play a fundamental role in many problems (see  \cite{Carleson65,La72}).  This is due to the fact that CPT is naturally associated to the Laplace operator which is a linear elliptic partial differential operator of second order with constant coefficients. Indeed  any subharmonic function is locally equal (up to a harmonic function) to the Newton  potential of its Riesz measure when $N \geq 3$, while in the case of the complex plane $\C \simeq \R^2$, we need to consider the logarithmic potential.
 
On the other hand, it is well known that in higher complex dimension $n \geq 2$, plurisubharmonic functions are rather connected to the complex Monge-Amp\`ere operator which is a fully non linear second order differential operator. Therefore Pluripotential theory cannot be completely described by logarithmic potentials as shown by Magnus Carlehed in \cite{Carlehed}. However the class of logarithmic potentials provides a natural class of plurisubharmonic functions which turns out to be included in the local domain of definition of the complex Monge-Amp\`ere operator.
This study was started by Carlehed (\cite{Carlehed99, Carlehed} in the case when the measue is compactly supported on $\C^n$ with a locally bounded potential.

Our main goal is to extend this study to the case of arbitrary probability measures on the complex projective space $\mathbb P^n$. The motivation for this study comes from the fact that the complex Monge-Amp\`ere operator plays an important role in K\"ahler geometry when dealing with the Calabi conjecture and the problem of the existence of a K\"ahler-Einstein metric (see \cite{GZ17}).
 Indeed in geometric applications one needs to consider degenenerate complex Monge-Amp\`ere equations. To this end a large class of singular potentials on which the complex Monge-Amp\`ere operator is well defined was introduced  (see \cite{GZ07}, \cite{BEGZ10}). This leads naturally to a  general definition of the complex Monge-Amp\`ere operator (see \cite{CGZ08}). 
  However the global domain of definition of the complex Monge-Amp\`ere operator on compact K\"ahler manifolds is not yet well understood. 
  
  Besides this, thanks to the works of Cegrell  and Blocki (see \cite{Ce04}, \cite{Bl04}, \cite{Bl06}), the local domain of definition  is characterized in terms of some local integrability conditions on approximating sequences. 
  
Our goal here is then to study a natural class of projective logarithmic potentials and show that it is contained in the (local) domain of definition of the complex Monge-Amp\`ere operator on the complex projective space $\mathbb P^n$,  giving an  interesting subclass of the domain of definition of the complex Monge-Amp\`ere operator on the complex projective space and exhibiting many different interesting behaviours.

More precisely, let $\mu$ be  a  probability  measure on the complex projective space $\mathbb{P}^{n}$. Then its projective logarithmic potential is defined on $\mathbb{P}^{n}$ as follows:
$$ 
\G_\mu (\zeta) := \int_{\mathbb P^n }\log \frac{\vert \zeta \wedge \eta \vert}{\vert \zeta \vert \vert \eta \vert}\ d \mu (\eta), \, \, \zeta \in 
\mathbb P^n.
$$

 \smallskip
 \smallskip

 Our first result is a regularizing property of the operator $\G$ acting on the convex compact set of probability measures on $\mathbb P^n$.

\smallskip
 \smallskip
  
{\bf Theorem A}. {\it Let $\mu$ be a probability measure on $\mathbb P^n$ ($n \geq 2$). Then

\smallskip

 1)  $\G_\mu  \in DMA_{loc} (\mathbb P^n,\omega)$ and  for any $0 < p < n$,  $G_\mu \in W^{2,p} (\mathbb P^n)$ ;  

\smallskip

 2)  for any $1 \leq k \leq n - 1$, the $k$-Hessian measure $(\omega +   ddĉ G_\mu)^k \wedge \omega^{n - k}$ is absolutely continous with respect to the Lebesgue measure on $(\mathbb P^n,\omega)$;

\smallskip

 3)  the Monge-Amp\`ere measure   $(\omega + dd^c \G_\mu)^n$ is absolutely continuous with respect to the Lebesgue measure on $(\mathbb P^n,\omega)$ if  the measure $\mu$ has no atom in $\mathbb P^n$; conversely  if the measure $\mu$ has an atom at some  point $a\in \mathbb P^n$, the complex Monge-Amp\`ere measure  $(\omega + dd^c \G_\mu)^n$  also has an atom at the same point $a$.}

\smallskip
\smallskip
 
Here $\omega := \omega_{FS}$ is the Fubini-Study metric on the projective space $\mathbb P^n$ and $ DMA_{loc} (\mathbb P^n, \omega)$ denotes the local domain of definition of the complex Monge-Amp\`ere operator on the K\"ahler manifold $(\mathbb P^n,\omega)$ (see Definition \ref{def:DMA_loc} below).

 \smallskip
 The fact that the projective logarithmic potential  of any measure belongs to the domain of definition of the complex Monge-Amp\`ere operator as well as the third property was proved earlier by the second named author (\cite{As17}). 

 These results  generalize and improve previous results by M. Carlehed who considered the local setting (see \cite{Carlehed, Carlehed99}).
 
 \smallskip
 \smallskip
 
 When the measure has a "positive dimension",  we obtain a strong regularity result in terms of this dimension as defined by the formula (\ref{eq:dim}) in section 3.2.

\smallskip
 \smallskip
 
{\bf Theorem B :} {\it Let $\mu$ be a probability measure on $\mathbb P^n$ ($ n \geq 2$) with positive dimension $ \gamma(\mu) > 0 $.
Then  the following holds :

1) $\G_\mu \in W^{1,p} (\mathbb P^n)$ for any $p$ such that $1 \leq p < (2 n - \gamma (\mu)) \slash (1 - \gamma(\mu))_+$; in particular $\G_\mu$ is H\"older continuous of any exponent $\alpha$ such that
$$
0 < \alpha < 1 -   2 n \frac{(1  - \gamma(\mu))_+}{ 2 n - \gamma(\mu)};
 $$

2)  $\G_\mu \in W^{2,p} (\mathbb P^n)$ for any 
$$
 1 < p < \frac{2 n -\gamma(\mu)}{(2 - \gamma(\mu))_+},
$$ 

3)   the  density of the complex Monge-Amp\`ere measure $(\omega + dd^c \G_\mu)^n$ with respect to the Lebesgue measure on $\mathbb P^n$ satisfies $g_\mu \in L^{q} (\mathbb {P}^n)$ for any 
$$
1 < q <    \frac{ 2 n - \gamma(\mu)}{ n (2 - \gamma(\mu))_+} \cdot
$$}

Recall that for a real number $s \in \R$, we set $s_+ := \max\{s,0\}$. Observe that in the last two statements, the cirtical exponent is $+ \infty$ when $ 2  \leq \gamma(\mu) \leq 2n$.

\section{Preliminaries}

\subsection{Lagrange identities}

 Let us fix some notations. For  $a = (a_0, \cdots, a_n) \in \C^{n + 1}$ and $ b =  (b_0, \cdots, b_n) \in \C^{n + 1}$ we set
 $$
 a \cdot \bar b := \sum_{j = 0}^n a_j \bar {b}_j,  \, \, \,  \, \, \vert a\vert^2 :=  \sum_{j = 0}^n \vert a_j \vert^2.
 $$
 We can define the wedge product $a \wedge b$ in the vector space  $\bigwedge^2 \C^{n + 1}$.
 The hermitian scalar product on $\C^{n +1}$ induces a natural hermitian scalar product on $\bigwedge^2 \C^{n + 1}$. If $(e_j)_{0 \leq j \leq n}$ is the associated canonical orthonormal basis of $\C^{n+1}$, then the  sequence $( e_i \wedge e_j)_{0 \leq i < j \leq n}$ is an orthonormal basis of the vector space $\bigwedge^2 \C^{n + 1}$. Therefore we have  
 $$
  a \wedge b = \sum_{0 \leq i < j \leq n}  (a_i b_j - a_j b_i)  \, e_i \wedge e_j,
 $$
and
 $$
 \vert  a \wedge b  \vert^2 = \sum_{0 \leq i <  j \leq n}  \vert a_i b_j - a_j b_i \vert^2.
 $$
 \begin{lem} (Lagrange Identities) 
 
  1. For any $a, b \in \C^{n + 1} \setminus \{0\}$,
 \begin{equation} \label{eq:Lagrange}
   \vert  a \wedge b  \vert^2 = \vert a \vert^2 \vert b\vert^2 - \vert a \cdot \bar b\vert^2.
 \end{equation}
 
 2. For any $a, b \in \C^{n + 1} \setminus \{0\}$,
 \begin{equation} \label{eq:Lagrange1}
   \frac{\vert  a \wedge b  \vert^2}{\vert a \vert^2 \vert b\vert^2 } = 1 - \frac{\vert a \cdot \bar b\vert^2}{\vert a \vert^2 \vert b\vert^2 }.
 \end{equation}
 
 3. For any $z, w \in \C^n$,
    \begin{equation} \label{eq:Lagrange2}
   \frac{\vert z - w\vert^2 + \vert  z \wedge w  \vert^2}{(1 + \vert z \vert^2)(1 +  \vert w\vert^2) } = 1 - \frac{ \vert 1 +  z \cdot \bar w\vert^2}{(1 + \vert z \vert^2)(1 +  \vert w\vert^2)}.
 \end{equation}
 \end{lem}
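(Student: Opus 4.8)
The plan is to deduce all three identities from the single classical Lagrange identity \eqref{eq:Lagrange}, which itself I would establish by a short coordinate expansion (or, more conceptually, by recognizing $|a \wedge b|^2$ as a Gram determinant). For \eqref{eq:Lagrange}, write $|a \wedge b|^2 = \sum_{0 \le i < j \le n} |a_i b_j - a_j b_i|^2$ and expand each summand as $|a_i|^2|b_j|^2 + |a_j|^2|b_i|^2 - a_i \bar a_j b_j \bar b_i - \bar a_i a_j \bar b_j b_i$. Summing over $i < j$, the first two groups combine into $\sum_{i \ne j} |a_i|^2 |b_j|^2$ and the last two into $\sum_{i \ne j} a_i \bar a_j b_j \bar b_i$. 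On the other side, $|a|^2|b|^2 = \sum_{i,j} |a_i|^2|b_j|^2$ and $|a \cdot \bar b|^2 = \big(\sum_i a_i \bar b_i\big)\big(\sum_j \bar a_j b_j\big) = \sum_{i,j} a_i \bar a_j b_j \bar b_i$; subtracting these, the diagonal terms $i=j$ cancel and exactly the two off-diagonal sums computed above remain, giving \eqref{eq:Lagrange}. Alternatively, with essentially no computation, one notes that $|a \wedge b|^2$ is the squared norm of a bivector, hence equals the Gram determinant $\det \begin{pmatrix} |a|^2 & a \cdot \bar b \\ \overline{a\cdot\bar b} & |b|^2\end{pmatrix} = |a|^2|b|^2 - |a\cdot\bar b|^2$.

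Part \eqref{eq:Lagrange1} is then immediate: since $a, b \ne 0$ one may divide the identity \eqref{eq:Lagrange} through by $|a|^2 |b|^2 > 0$.

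For \eqref{eq:Lagrange2} I would apply \eqref{eq:Lagrange} to the lifted vectors $a := (1, z_1, \ldots, z_n)$ and $b := (1, w_1, \ldots, w_n)$ in $\C^{n+1}$. Then $|a|^2 = 1 + |z|^2$, $|b|^2 = 1 + |w|^2$ and $a \cdot \bar b = 1 + z \cdot \bar w$, so the right-hand side of \eqref{eq:Lagrange1} becomes the right-hand side of \eqref{eq:Lagrange2}. For the left-hand side, split the index pairs $0 \le i < j \le n$ into those with $i = 0$, which contribute $\sum_{j=1}^n |b_j - a_j|^2 = |z - w|^2$, and those with $1 \le i < j \le n$, which contribute $\sum_{1 \le i < j \le n} |z_i w_j - z_j w_i|^2 = |z \wedge w|^2$; hence $|a \wedge b|^2 = |z - w|^2 + |z \wedge w|^2$, and \eqref{eq:Lagrange2} falls out of \eqref{eq:Lagrange1} for these particular vectors.

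The argument has no genuine obstacle. The only points requiring care are the index bookkeeping in \eqref{eq:Lagrange2} — correctly separating the "mixed" pairs $(0,j)$ from the "interior" pairs $(i,j)$ with $i \ge 1$ — and checking that the conventions for the hermitian product $z \cdot \bar w$ and the wedge product $z \wedge w$ on $\C^n$ match those on $\C^{n+1}$ under the embedding $z \mapsto (1,z)$, which they do by inspection.
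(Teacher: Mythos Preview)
Your proposal is correct and follows essentially the same route as the paper: the paper simply invokes \eqref{eq:Lagrange} as the classical Lagrange identity, derives \eqref{eq:Lagrange1} by dividing through, and obtains \eqref{eq:Lagrange2} by specializing to $a=(1,z)$, $b=(1,w)$. You supply the missing details---the coordinate/Gram-determinant verification of \eqref{eq:Lagrange} and the explicit splitting of the index pairs $(0,j)$ versus $(i,j)$ with $i\ge 1$ in \eqref{eq:Lagrange2}---but the architecture is identical.
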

 \begin{proof} Here  $a \cdot \bar b := \sum_{j = 0 }^n a_j \bar b_j$ is the euclidean hermitian product on $\C^{n + 1}$.
 The first identity is the so called Lagrange identity, while the second follows immediately from the first.

 The third identity follows from the second one applied to the vectors $a = (1,z)$ and $b = (1,w)$.
 \end{proof}

\subsection{The complex projective space}
 Let $\mathbb P^n  = \C^{n + 1} \setminus \{0\} \slash  \C_*$ be the complex projective space of dimension $n \geq 1$ and
 $$
 \pi : \C^{n + 1} \setminus \{0\} \longrightarrow \mathbb P^n,
 $$
 the canonical projection. which sends a point $\zeta = (\zeta_0, \cdots, \zeta_n) \in  \C^{n + 1} \setminus \{0\}$ to the complex line $\C_* \cdot \zeta$ which we denote by $[\zeta] = [\zeta_0, \cdots, \zeta_n]$.

 By abuse of notation we will denote by $\zeta= [\zeta_0, \cdots, \zeta_n]$ and call the $\zeta_j$'s the homogenuous coordinates of the (complex line) $\zeta$.

 As a complex manifold $\mathbb{P}^n$ can be coved by a finite number of charts given by
 $$
 \mathcal{U}_k := \{\zeta \in \mathbb P^n ; \zeta_k \neq 0 \}, \, \, \, \, \,  0 \leq k \leq n.
 $$
 For a fixed $k = 0, \cdots, n$, the corresponding coordinate chart is defined on $\mathcal{U}_k$ by the formula
 $$
 {\bf z}^k (\zeta)  = z^k    := (z^k_j)_{0 \leq j \leq n, j \neq k} , \, \, \mrm{where} \, \, z^k_j := \zeta_j \slash \zeta_k \, \, \mrm{for} \, \,  j \neq k.
 $$
 The map $\gamma_k : \mathcal{U}_k \simeq \C^n$  is an homomeorphism and for $k \neq \ell$ the transition functions (change of coordinates)  
 $$
 {\bf z}^k \circ {( {\bf z}^\ell)}^{-1}  :   {\bf z}^\ell ( \mathcal {U}_\ell \cap  \mathcal {U}_k) \longrightarrow    {\bf z}^k ( \mathcal {U}_\ell \cap  \mathcal {U}_k)
$$
 is given by

 $$
  w =   {\bf z}^\ell \circ {( {\bf z}^k)}^{-1} (z_1, \cdots, z_n), \, \, \, \, \,   \, \,  \mrm{for} \, \, (z_1, \cdots, z_n) \in \C^n, z_\ell \neq 0, 
 $$
 where $w_i = z_i \slash z_\ell$ for $i \notin \{k,\ell\}$, $w_k = 1 \slash z_\ell$ and $w_\ell = z_k \slash z_\ell$.

 The  $(1,1)$-form $dd^c \log \vert \zeta \vert$ is smooth, $d$-closed  on $\C^{n + 1} \setminus \{0\}$ and invariant under the $\C_*$-action. Therefore it descends to $\mathbb P^n$ as a smooth closed $(1,1)$-form $\omega_{FS}$ on $\mathbb P^n$ so that
 $$
  dd^c \log \vert \zeta \vert = \pi^* (\omega_{FS}), \, \, \, \mrm{in} \, \, \C^{n + 1} \setminus \{0\} .
 $$
 In the local chart $(\mathcal U_k,{\bf z}^k)$ we have
 $$
 \omega\mid{\mathcal U_k} = \frac{1}{2} dd^c \log (1 + \vert {z}^k\vert^2)
 $$

 Observe that the Fubini-Study form $\omega$ is a K\"ahler form on $\mathbb P^n$ and the correponding Fubini-Study volume form $d V_{FS} = \omega^n \slash n !$ is given in the chart $(\mathcal U_k,{\bf z}^k) \simeq (\C^n ,z)$ by the formula
 $$
d V_{FS} \mid \C^n  = c_n \frac{ d V_{2 n} (z)}{(1 + \vert z\vert^2)^{n + 1}},
 $$
 where $d V_{2 n} (z) = \beta_n = \beta^n \slash n !$ is the euclidean volume form on $\C^n$,   $\beta := dd^c \vert z\vert^2$ being the standard K\"ahler  metric on $\C^n$.
 
 \subsection{The complex  Monge-Amp\`ere operator} Here we recall some definitions and give a  useful characterization of the local domain of definition of the complex  Monge-Amp\`ere operator given by  Z. B\l ocki (see \cite{Bl04}, \cite{Bl06}).

 \begin{defi} \label{def:DMA_loc}
 Let  $X$ be a complex maniflod of dimension $n$ an $\eta$ a smooth closed (semi)-positive $(1,1)$-form on $X$. 
 
 1) We say that a function $\f : X \longrightarrow \R \cup \{- \infty\}$ is  $\eta$-plurisubharmonic in $X$ if it is locally the sum of a plurisubharmonic function and a smooth function and $\eta + dd^c \f$ is a positive current on $X$. We  denote by $PSH (X,\eta)$ the convex set of all of $\eta$-plurisubharmonic  functions  in $X$. 
 
 2) By definition, the set  $DMA_{Loc}(X,\eta)$ is the set of  functions $\f \in PSH (X,\eta)$   for which
 there exists a positive Borel measure  $\sigma = \sigma_\f$ on $X$ such that  for all  open  $ U\subset\subset \Omega $  and  $\forall (\f_{j})\in PSH(U,\eta)\cap C^{\infty}(U)$ $\searrow$ $\f$ in $U$,  the sequence of local Monge-Amp\`ere measures  $(\eta + dd^{c}\f_{j})^{n}$ converges weakly to $\sigma$ on $U$. In this case, we set $(\eta + dd^{c}\f)^{n}=\sigma_\f$ and call it the complex Monge-Amp\`ere measure of the function $\f$ in the manifold $(X,\eta)$.
 
 When $\eta = {\bf 0}$, we write $DMA_{loc} (X) = DMA_{loc} (X,{\bf 0})$.
 
 \end{defi}
 \noindent In the case when $X \subset \C^n$ is an open subset and $\eta = {\bf 0}$,  Bedford and Taylor \cite{BT76, BT82} extended the complex Monge-Amp\`ere operator $(dd^{c}u)^{n}$ to  plurisubharmonic functions which are locally bounded in $X$. Moreover, they showed that this operator is continuous under decreasing sequences in $PSH (X) \cap L^{\infty}_{Loc} (X)$.

In \cite{De87}, Demailly extended  the complex Monge-Amp\`ere operator $(dd^{c})^{n}$ to plurisubharmonic functions  locally bounded near the boundary $\partial X$ i.e. in the complement of a compact subset of $X$.

When $X = \mathbb P^n$ and $\eta = \omega_{FS}$, it was proved in \cite{CGZ08} that if $\f \in PSH (\mathbb P^n,\omega_{FS})$ is bounded in a neighbourhood of a divisor in $\mathbb P^n$, then $\f \in DMA_{loc} (\mathbb P^n,\omega_{FS})$.

When $\Omega \Subset \C^n$ is a bounded hyperconvex domain, the set $DMA_{loc} (\Omega)$ coincides with the Cegrell class $\mathcal E (\Omega)$ (see \cite{Ce04}, \cite{Bl06}).
Moreover for any $u_1, \cdots, u_n$ $DMA_{loc} (\Omega)$, it is possible to define the intersection current $dd^c u_1 \wedge \cdots \wedge dd^c u_n$ as a positive Borel measure on $\Omega$ (\cite{Ce04}).

Therefore for any $\f_1, \cdots, \f_n \in DMA_{loc} (X,\eta)$, it is possible to define the   intersection current 
$$ \mathcal M (\f_1,\cdots,\f_n) := (\eta + dd^c \f_1) \wedge \cdots \wedge (\eta + dd^c \f_n)
$$ 
as a positive Borel measure on $X$.
In particular if $X$ is compact and $\eta$ is a smooth $(1,1)-$form on $X$ which is closed semi-positive and big i.e. $\int_X \eta^n > 0$, then 
$$
\int_X (\eta + dd^c \f_1) \wedge \cdots \wedge (\eta + dd^c \f_n) = \int_X \eta^n.
$$

Moreover the operator $\mathcal M$ is continuous for  monotone convergence of sequences in $DMA_{loc} (X,\eta)$.

In dimension two, a simple characterization of the local domain of definition of $(dd^{c})^{2}$ was  given by \cite{Bl04}.
\begin{thm}\cite{Bl04}
 Suppose $\Omega$ is an open subset of   $\mathbb{C}^{2}$, then $$DMA_{Loc}(\Omega)=PSH(\Omega)\cap {W}^{1,2}_{loc}(\Omega),$$
 where ${W}^{1,2}_{loc}(\Omega)$ is the usual Sobolev space.
 \end{thm}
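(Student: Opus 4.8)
The plan is to prove the two inclusions separately. Fix throughout open sets $U'\Subset U''\Subset\Omega$ and a cutoff $\chi\in C^\infty_c(U'')$ with $0\le\chi\le1$ and $\chi\equiv1$ on $U'$; I will only ever work with smooth psh approximants, i.e.\ for a given $u$ choose $u_j\in PSH(U'')\cap C^\infty(U'')$ with $u_j\searrow u$, prove estimates uniform in $j$, and pass to the limit. The whole argument rests on a pair of a priori estimates, valid for such $u_j$, comparing the total mass of $(dd^c u_j)^2$ on $U'$ with the Dirichlet energy $\int_{U''}du_j\wedge d^c u_j\wedge\beta\simeq\int_{U''}|\nabla u_j|^2\,dV$, up to $L^p$-norms of $u_j$:
$$
\int_{U'}(dd^c u_j)^2\ \le\ C\Big(\|u_j\|_{L^1(U'')}+\|u_j\|^2_{L^2(U'')}+\int_{U''}du_j\wedge d^c u_j\wedge\beta\Big),
$$
and, conversely,
$$
\int_{U'}du_j\wedge d^c u_j\wedge\beta\ \le\ C\Big(\|u_j\|_{L^1(U'')}+\|u_j\|^2_{L^2(U'')}+\int_{U''}(dd^c u_j)^2\Big).
$$
Both are proved by a chain of integrations by parts, starting from the identity $\int\chi^2(dd^c u_j)^2=-2\int\chi\,d\chi\wedge d^c u_j\wedge dd^c u_j$ (resp.\ the analogous one for $\int\chi^2 du_j\wedge d^c u_j\wedge\beta$), followed by the Cauchy--Schwarz inequality for the positive current $dd^c u_j$ to separate $\nabla u_j$ from $\nabla\chi$, the identity $du_j\wedge d^c u_j=dd^c(u_j^2/2)-u_j\,dd^c u_j$, and the pointwise bounds $|dd^c\chi|,\ d\chi\wedge d^c\chi\le C\beta$. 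The fact that the complex dimension is exactly $2$ is used in an essential way: it makes the intermediate term $du_j\wedge d^c u_j\wedge dd^c u_j$ a top-degree form (hence a well-defined measure), which is what lets the chain close.

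For the inclusion $PSH(\Omega)\cap W^{1,2}_{loc}(\Omega)\subseteq DMA_{loc}(\Omega)$, take $u\in PSH(\Omega)\cap W^{1,2}_{loc}(\Omega)$ and $u_j\searrow u$. The norms $\|u_j\|_{L^p(U'')}$ are uniformly bounded because $u\le u_j\le u_1$, and a lemma of B\l ocki says that a decreasing sequence of psh functions whose limit lies in $W^{1,2}_{loc}$ is bounded --- in fact convergent --- in $W^{1,2}_{loc}$. Hence the right-hand side of the first estimate is bounded uniformly in $j$, so $(dd^c u_j)^2$ has locally uniformly bounded mass and a subsequence converges weakly to a positive measure $\sigma$. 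That $\sigma$ is independent of the approximating sequence follows from comparing two competing sequences through their pointwise minimum, using the $W^{1,2}_{loc}$-convergence just quoted together with continuity of the operator under decreasing limits and polarization to give meaning to the mixed currents $dd^c v\wedge dd^c w$ for $v,w\in PSH\cap W^{1,2}_{loc}$. This gives $u\in DMA_{loc}(\Omega)$ with $(dd^c u)^2=\sigma$.

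For the reverse inclusion, let $u\in DMA_{loc}(\Omega)$ and $u_j\searrow u$ smooth psh; by definition $(dd^c u_j)^2\to(dd^c u)^2$ weakly, so $M:=\sup_j\int_{U''}(dd^c u_j)^2<\infty$ for every $U''\Subset\Omega$. Since psh functions lie in every $L^p_{loc}$, the norms $\|u_j\|_{L^p(U'')}$ are again uniformly bounded, so the second a priori estimate gives $\sup_j\int_{U'}|\nabla u_j|^2\,dV<\infty$. Then a subsequence of $\nabla u_j$ converges weakly in $L^2(U')$, necessarily to $\nabla u$, and lower semicontinuity of the $L^2$-norm gives $\int_{U'}|\nabla u|^2\,dV<\infty$; as $U'$ was arbitrary, $u\in W^{1,2}_{loc}(\Omega)$.

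The main obstacle is the genuinely unbounded case. When $u$ is locally bounded near $\partial U''$ the two estimates are classical (Bedford--Taylor, Demailly) and the cross terms produced by the integrations by parts are harmless; the real content of the theorem is that for unbounded $u$ these integrations by parts repeatedly generate terms of the form "(unbounded function) $\times$ (positive measure)" whose finiteness is exactly what is in doubt, and the estimates close only because the complex dimension is $2$ --- so that a single weight level of the Cegrell--B\l ocki energy hierarchy intervenes and can be absorbed into, or converted into, the unweighted condition $u\in W^{1,2}_{loc}$. Keeping the lower-order integrals (such as $\int\chi^2 u_j^2\,\beta\wedge dd^c u_j$) from secretly reintroducing the very energy one is trying to bound, and establishing the auxiliary $W^{1,2}_{loc}$-continuity lemma for decreasing psh sequences, are the delicate book-keeping points of the argument.
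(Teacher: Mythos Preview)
The paper does not prove this theorem at all: it is stated with the citation \cite{Bl04} and used as a black box, with no argument supplied. So there is no ``paper's own proof'' to compare against; your proposal is really a sketch of B\l ocki's original argument in \cite{Bl04}, and as such it is broadly correct. The two a priori inequalities you write down, obtained by integrating by parts against a cutoff and using Cauchy--Schwarz for the positive current $dd^c u_j$, are indeed the core of B\l ocki's proof, and your remark that dimension $2$ is exactly what makes the chain of integrations by parts close (only one intermediate energy level appears) is the right explanation of why the characterization is so clean in $\C^2$ and not in higher dimension.

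One small point worth tightening: in the direction $PSH\cap W^{1,2}_{loc}\subset DMA_{loc}$ you invoke a ``lemma of B\l ocki'' giving $W^{1,2}_{loc}$-convergence of the decreasing sequence, and then argue uniqueness of the limit measure by ``comparing two competing sequences through their pointwise minimum'' and polarization. This is morally right but a bit telegraphic; in \cite{Bl04} the independence of the limit from the approximating sequence is obtained more directly, by showing that for $u,v\in PSH\cap W^{1,2}_{loc}$ the mixed current $dd^c u\wedge dd^c v$ is well defined (via $du\wedge d^c v$ being a current with measure coefficients) and symmetric, and then that $(dd^c u_j)^2\to (dd^c u)^2$ follows from the $L^2$-convergence of gradients. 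Your outline would benefit from stating explicitly which auxiliary continuity statement you are using at that step, since this is where most of the work beyond the two displayed estimates actually lies.
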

 
 In higher dimension, the characterization of the set $DMA_{Loc}(\Omega)$ is more complicated (see \cite{Bl06}).
\begin{thm} \label{thm:Bl}
Let  $u$ be a negative plurisubharmonic function on $\Omega \subset \mathbb{C}^{n}$, $n \geq 3$. The following are equivalent:

1- $u\in DMA_{Loc}(\Omega)$,
   
2- $\forall z\in \Omega $,   $\exists U_{z}\subset \Omega$ an open neighborhood of $z$ such that for  any sequence $ u_{j}\in PSH\cap C^{\infty}(U_{z})\searrow u $ in  $U_{z},$ the sequences
$$
|u_{j}|^{n-p-2}du_{j}\wedge d^{c}u_{j}\wedge (dd^{c}u_{j})^{p}\wedge\omega^{n-p-1},\;\; p=0,1,...,n-2
$$
are locally weakly bounded in  $U_{z}$.
\end{thm}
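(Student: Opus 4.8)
The plan is to reduce the equivalence, via a family of integration‑by‑parts identities for \emph{smooth} negative plurisubharmonic functions, to two things: a formal chain that bounds the Monge--Amp\`ere masses in terms of the gradient currents of (2), and a comparison argument between two approximating sequences that pins down the limit. Both conditions being local, I fix a small ball $B=B(z_{0},R)\Sub\Omega$ on which $u<0$, a cut‑off $\chi\in C^{\infty}_{0}(B)$ with $0\le\chi\le1$, $\chi\equiv1$ near $z_{0}$ and $-C\omega\le dd^{c}\chi\le C\omega$, and a decreasing sequence $PSH(B)\cap C^{\infty}(B)\ni u_{j}\searrow u$; write $v_{j}:=-u_{j}>0$. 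For $a\ge0$ and $0\le p\le n$ put
$$
E_{a,p}(u_{j}):=\int_{B}\chi\,v_{j}^{a}\,(dd^{c}u_{j})^{p}\we\omega^{n-p},
\qquad
F_{a,p}(u_{j}):=\int_{B}\chi\,v_{j}^{a}\,du_{j}\we d^{c}u_{j}\we(dd^{c}u_{j})^{p}\we\omega^{n-p-1},
$$
so the currents in the statement have masses comparable to $F_{n-p-2,p}(u_{j})$, $p=0,\dots,n-2$. The engine is the family of identities obtained (legitimately, since the $u_{j}$ are smooth) by expanding $dd^{c}(v_{j}^{a+1})=-(a+1)v_{j}^{a}dd^{c}u_{j}+(a+1)a\,v_{j}^{a-1}du_{j}\we d^{c}u_{j}$ and integrating twice by parts against $\chi\,(dd^{c}u_{j})^{p}\we\omega^{n-p-1}$:
$$
(a+1)E_{a,p+1}(u_{j})=(a+1)a\,F_{a-1,p}(u_{j})-\int_{B}v_{j}^{a+1}\,dd^{c}\chi\we(dd^{c}u_{j})^{p}\we\omega^{n-p-1},
$$
whence (with constants depending only on $n,r,R$) both $E_{a,p+1}\lesssim F_{a-1,p}+E_{a+1,p}$ and $F_{a-1,p}\lesssim E_{a,p+1}+E_{a+1,p}$. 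A Cauchy--Schwarz inequality $|dg\we d^{c}h\we T|\le(dg\we d^{c}g\we T)^{1/2}(dh\we d^{c}h\we T)^{1/2}$ for closed positive currents $T$, applied after writing $v_{j}^{a}\,du_{j}\we d^{c}u_{j}=c_{a}\,dg\we d^{c}g$ with $g=v_{j}^{(a+2)/2}$, will be the tool for comparing two different approximating sequences.

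\emph{Step 1: $(2)\Rightarrow$ the Monge--Amp\`ere masses are locally bounded.} One integration by parts gives $E_{0,n}(u_{j})=\int_{B}u_{j}\,dd^{c}\chi\we(dd^{c}u_{j})^{n-1}$, hence $E_{0,n}\lesssim E_{1,n-1}$, and the identity $v_{j}^{m}dd^{c}u_{j}=-\tfrac1{m+1}dd^{c}(v_{j}^{m+1})+m\,v_{j}^{m-1}du_{j}\we d^{c}u_{j}$, applied one factor at a time, yields $E_{m,n-m}(u_{j})\lesssim E_{m+1,n-m-1}(u_{j})+F_{m-1,n-m-1}(u_{j})$ for $m=1,\dots,n-1$. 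Since the weight on each of these gradient terms is exactly $m-1=n-(n-m-1)-2$, they are precisely the currents occurring in (2); chaining these estimates, and using the a priori finite bound $E_{n,0}(u_{j})=\int_{B}\chi v_{j}^{n}\,\omega^{n}\lesssim\int_{\mrm{supp}\,\chi}(-u)^{n}\,dV_{FS}<\infty$ (valid because $0<v_{j}\le-u$ and $u\in L^{n}_{loc}$), we obtain
$$
\sup_{j}\int_{B}\chi\,(dd^{c}u_{j})^{n}=\sup_{j}E_{0,n}(u_{j})\lesssim\sum_{p=0}^{n-2}\sup_{j}F_{n-p-2,p}(u_{j})+\int_{\mrm{supp}\,\chi}(-u)^{n}\,dV_{FS}<\infty.
$$
So under (2) the measures $(dd^{c}u_{j})^{n}$, and with them all the diagonal energies $E_{n-m,m}(u_{j})$, $m=0,\dots,n$, are locally uniformly bounded.

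\emph{Step 2: uniqueness of the limit $((2)\Rightarrow(1))$ and necessity $((1)\Rightarrow(2))$.} By Step 1 the measures $(dd^{c}u_{j})^{n}$ have weak-$*$ cluster points; the heart of the matter is to show such a cluster point $\sigma$ is unique and independent of the sequence, which then gives $u\in DMA_{Loc}(B)$. Given a second approximation $\tilde u_{k}\searrow u$ and $\phi\in C^{\infty}_{0}(B)$, I would expand $\int\phi\bigl[(dd^{c}u_{j})^{n}-(dd^{c}\tilde u_{k})^{n}\bigr]$ telescopically into terms $\int\phi\,dd^{c}(u_{j}-\tilde u_{k})\we(dd^{c}u_{j})^{\ell}\we(dd^{c}\tilde u_{k})^{n-1-\ell}$, integrate by parts to carry one $dd^{c}$ onto $u_{j}-\tilde u_{k}$, and bound the resulting mixed currents, by repeated use of the Cauchy--Schwarz inequality above, in terms of the energies $E_{a,p}$ and $F_{a,p}$ of $u_{j}$ and of $\tilde u_{k}$ \emph{separately} — all of which are uniformly bounded by Step 1 and (2). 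Passing to a common refinement along which $u_{j}-\tilde u_{k}\to0$ in $L^{1}_{loc}$ while remaining dominated by the fixed $L^{1}_{loc}$ function $u_{1}+\tilde u_{1}-2u$, the quasicontinuity of plurisubharmonic functions with respect to the Monge--Amp\`ere capacity forces the difference to $0$; hence $(2)\Rightarrow(1)$. For the converse I would argue contrapositively: if (2) fails, some $F_{n-p-2,p}(u_{j})\to\infty$ along an approximation, and reading the same integration‑by‑parts identities as lower bounds, together with the Cauchy--Schwarz comparison, one manufactures an approximation $\tilde u_{k}\searrow u$ whose Monge--Amp\`ere masses $\int_{K}(dd^{c}\tilde u_{k})^{n}$ are unbounded, so $(dd^{c}\tilde u_{k})^{n}$ cannot converge and $u\notin DMA_{Loc}(B)$.

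\emph{Where the difficulty lies.} Step 1 is essentially formal. Step 2 is the real obstacle: the individual mixed currents $(dd^{c}u_{j})^{\ell}\we(dd^{c}\tilde u_{k})^{n-1-\ell}$ need not converge, so every estimate must be carried out at the level of the smooth approximants with constants uniform in $j$ and $k$, and the vanishing of the difference is a genuinely quantitative cancellation rather than a soft limiting statement. The Cauchy--Schwarz bookkeeping has to be organized so that each weight $v_{j}^{a}$ or $\tilde v_{k}^{a}$ produced by an integration by parts is matched by a correspondingly bounded energy — this is exactly why the \emph{whole} range $p=0,\dots,n-2$ of conditions in (2) is needed, not merely the top one $p=n-2$. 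As a check on the bookkeeping: the conditions in (2), formally specialized to $n=2$, reduce to $du_{j}\we d^{c}u_{j}\we\omega$ being locally bounded, i.e. to $u\in W^{1,2}_{loc}$, recovering B\l ocki's dimension‑two theorem, while for general $n$ the endpoint $p=0$ reads $(-u)^{n/2}\in W^{1,2}_{loc}$, in line with the local description of Cegrell's class $\mathcal{E}$ on hyperconvex domains.
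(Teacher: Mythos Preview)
The paper does not prove this theorem at all: it is quoted verbatim from B\l ocki's work \cite{Bl06} (and \cite{Bl04} for the two--dimensional case) as background, with no argument supplied. There is therefore no ``paper's own proof'' against which to compare your proposal.

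That said, your sketch is in the spirit of B\l ocki's original argument --- the chain of integration--by--parts identities relating the weighted energies $E_{a,p}$ and the gradient currents $F_{a,p}$ is exactly the mechanism he uses, and your Step~1 is correct and complete as written. Step~2, however, is where you yourself flag the difficulty, and rightly so: the telescoping/Cauchy--Schwarz scheme you outline does not by itself show that the cross terms $\int\phi\,dd^{c}(u_{j}-\tilde u_{k})\wedge(dd^{c}u_{j})^{\ell}\wedge(dd^{c}\tilde u_{k})^{n-1-\ell}$ tend to zero; uniform boundedness of all the energies only gives that these terms are \emph{bounded}, not small. B\l ocki's actual uniqueness argument proceeds differently: he first shows (using the full family of bounded currents in (2)) that the intermediate currents $(dd^{c}u_{j})^{p}$ themselves converge weakly for each $p$, and then upgrades this inductively to convergence of $u_{j}(dd^{c}u_{j})^{p}$ and finally $(dd^{c}u_{j})^{p+1}=dd^{c}\bigl(u_{j}(dd^{c}u_{j})^{p}\bigr)$, using a quasicontinuity argument at each step rather than a single global comparison between two sequences. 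Your contrapositive for $(1)\Rightarrow(2)$ is also only a plan; in B\l ocki's paper this direction is handled by showing that membership in $DMA_{Loc}$ forces, again inductively in $p$, each of the gradient currents in (2) to be bounded. If you want to turn your proposal into a proof you should reorganize Step~2 around this inductive convergence of the lower--order currents rather than a direct two--sequence comparison.
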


It is clear that these results extend to the case of $DMA_{loc} (X,\eta)$. 
 
 \begin{rem}
 We can define the global domain of defintion $DMA (X,\eta)$ by taking only bounded global approximants. Therefore $DMA_{loc} (X,\eta) \subset DMA (X,\eta)$ but the inclusion is strict (see \cite{GZ07}) at least in the case when  $(X,\eta)$ is a compact K\"ahler manifold. Very little is known about the space $DMA (X,\eta)$ (see \cite{CGZ08}).
 \end{rem}
 
 \subsection{The Lelong Class}
 Recall that the Lelong class  $\mathcal L (\C^n)$  is defined as the convex set of plurisubharmonic functions in $\C^n$ satisfying the following growth condition at infinity :
 \begin{equation}
  u (z ) \leq C_u + \frac{1}{2} \log (1 + \vert z\vert^2), \, \, \, \forall z \in \C^n,
 \end{equation}
 where $C_u$ is a constant depending on $u$ (see \cite{Le68}).

 There are two important subclasses of $\mathcal L (\C^n)$. Set
 $$
 \mathcal L_+ (\C^n) := \{u \in \mathcal L (\C^n) ; u (z) = \frac{1}{2} \log (1 + \vert z\vert^2) + O (1), \, \text{as} \, \, \vert z\vert  \to + \infty\}.
 $$

 Associated to a function $u \in \mathcal L (\C^n)$ we define its Robin function of $u$ (\cite{BT88})
 $$
 \rho_u (\xi) := \limsup_{ \C \ni \lambda  \to  \infty} (u (\lambda \xi) - \log \vert \lambda \xi\vert) = \limsup_{ \C \ni \lambda  \to  \infty} (u (\lambda \xi) - \frac{1}{2}\log (1 + \vert \lambda \xi\vert^2).
 $$
 When $\rho_u^* \not \equiv  - \infty$, then  it is a homogenous function of order $0$ on $\C^n$ such that $\rho_u^* (z) + \log \vert z\vert$ is plurisubharmonic in $\C^n$. This implies that $\rho_u^*$  is a well defined function on the projective space $\mathbb P^{n - 1}$ which is actually  an $\omega_{FS}$-psh on $\mathbb  P^{n - 1}$, where $\omega_{FS}$ is the Fubini-Study metric on $\mathbb  P^{n - 1}$.

Following (\cite{BT88}),  we can define the subclass of logarithmic  Lelong potentials as follows :
 $$
  \mathcal L_\star (\C^n) := \{ u \in  \mathcal L (\C^n) ; \rho_u \not \equiv - \infty \}\cdot
 $$
 Observe that $ \mathcal L_+ (\C^n) \subset \mathcal L_\star (\C^n) $ and $ u\in \mathcal L_\star (\C^n) $ iff $\rho_u \in L^1 (\mathbb P^{n - 1})$.
 We refer to \cite{Ze07} for more properties of this class.

Then there is a $1$-$1$ correspondence between the Lelong class $\mathcal L (\C^n)$ and the set $PSH (\Pn,\omega)$ of $\omega$-plurisubharmonic functions on  $\mathbb P^n$. Indeed we will write  $\mathbb P^n = \mathcal {U}_0 \,   \dot {\cup} \,  H_{\infty}$, where
$$
 H_{\infty} := \{\zeta \in \mathbb P^n ; \zeta_0 = 0\} \simeq \mathbb P^{n-1},
 $$
 is the hyperplane at infinity and  observe that $\mathcal {U}_0 = \mathbb{P}^n \setminus H_{\infty}$.

 Given $u \in  \mathcal L (\C^n)$, we associate the function $\f$ defined on $\mathcal {U}_0$  by
 $$
 \phi_u  (\zeta) := u (z_1, \cdots , z_n) - \frac{1}{2} \log (1 + \vert z\vert^2), \, \, \, \text{with} \, \,  z : = z^0  = (\zeta_1 \slash \zeta_0, \cdots , \zeta_n \slash \zeta_0).
 $$
By definition of the class $\mathcal L (\C^n)$, the function $\phi_u$ is locally upper bounded in $\mathcal U_0$ near $H_{\infty}$. Since $H_{\infty} = \{0\} \times \mathbb P^{n-1}$ is a proper analytic subset (hence a pluripolar subset) of  $\mathbb P^n$, the function $\phi_u$ can be extended into an $\omega$-plurisubharmonic function on $\mathbb P^n$ by setting
 $$
  \phi_u  (0,\zeta') = \limsup_{ \mathcal U_0 \ni \zeta \to (0,\zeta')} \f (\zeta),  \, \, \, \zeta' \in \mathbb P^{n - 1}.
 $$
  Then we have a well defined "homogenization" map
 $$
   \mathcal{L} (\C^n) \ni \longmapsto \phi_u \in PSH (\mathbb P^n, \omega_{FS}).
 $$
 This is a bijective map and its  inverse map is defined as follows :  given $\phi \in PSH (\mathbb P^n, \omega_{FS}),$ we can define
 $$
 u_\phi (z) := \phi ([1,z]) + (1 \slash 2) \log (1 + \vert z\vert^2).
 $$
 It is clear that $ u_\phi  \in \mathcal{L} (\C^n)$  and $H (u_\phi) = \phi$. Observe that for $\zeta' \in \mathbb{P}^{n -1}$, we have
 $$
  \rho_u  (\zeta') = \phi_u (0,\zeta').
  $$
\begin{exa}
  Let $P \in \C_d [z_1, \cdots,z_n]$ be a polynomial of degree $d \geq 1$. Then $u := (1 \slash d)  \log \vert P\vert  \in
   \mathcal{L} (\C^n)$. It is easy to see  that its homogeneization $\phi = \phi_u$ is given by
   $$
   \phi (\zeta) :=  (1 \slash d)  \log \vert Q (\zeta) \vert - \log \vert \zeta \vert, \, \, \zeta \in \mathbb{P}^n,
$$
where $Q$ is the unique  homogenuous polynomial on $\C^{n + 1 }$ of degree $d$   such that by $Q (1,z) = P (z)$ for $z \in \C^n$.
\end{exa}

\begin{pro} Let $u_1, \cdots, u_n \in \mathcal L_+ (\C^n)$ and for each $i = 1, \cdots, n$ set $\phi_i := \phi_{u_i}$ the homogeneization of $u_i$ on $\mathbb  P^n.$ Then
\begin{equation} \label{eq:totalmass}
\int_{\C^n} dd^c u_1 \wedge \cdots \wedge dd^c u_n  = \int_{\mathbb{P}^n} (\omega + dd^c \phi_1) \wedge \cdots \wedge (\omega + dd^c \phi_n) = 1.
\end{equation}
\end{pro}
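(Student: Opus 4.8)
The plan is to reduce the projective identity to a classical statement in $\C^n$ via the homogenization bijection, and then invoke the fact that functions in $\mathcal L_+(\C^n)$ extend continuously across $H_\infty$. First I would observe that each $\phi_i = \phi_{u_i}$ is, by definition of $\mathcal L_+(\C^n)$, bounded in a neighborhood of the hyperplane $H_\infty \subset \mathbb P^n$: indeed $\phi_{u_i}(\zeta) = u_i(z) - \tfrac12\log(1+|z|^2)$ with $|z| \to \infty$ near $H_\infty$, and the $\mathcal L_+$-condition $u_i(z) = \tfrac12\log(1+|z|^2) + O(1)$ says exactly that $\phi_{u_i}$ is bounded there. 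Hence by the result of \cite{CGZ08} quoted above, $\phi_i \in DMA_{loc}(\mathbb P^n,\omega)$, so the mixed Monge-Amp\`ere current $(\omega+dd^c\phi_1)\wedge\cdots\wedge(\omega+dd^c\phi_n)$ is a well-defined positive measure on all of $\mathbb P^n$, and the total-mass formula $\int_{\mathbb P^n}(\omega+dd^c\phi_1)\wedge\cdots\wedge(\omega+dd^c\phi_n) = \int_{\mathbb P^n}\omega^n = 1$ holds (the Fubini-Study volume is normalized to $1$, consistent with $dV_{FS}=\omega^n/n!$ and the stated local formula). This gives the second equality.

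For the first equality, the point is that the mixed mass carried by $H_\infty$ is zero. On the chart $\mathcal U_0 \simeq \C^n$ we have $\omega = dd^c\bigl(\tfrac12\log(1+|z|^2)\bigr)$, and therefore $\omega + dd^c\phi_i = dd^c\bigl(\phi_i([1,z]) + \tfrac12\log(1+|z|^2)\bigr) = dd^c u_i$ on $\C^n$; consequently
\begin{equation}
\int_{\C^n} dd^c u_1 \wedge \cdots \wedge dd^c u_n = \int_{\mathcal U_0} (\omega+dd^c\phi_1)\wedge\cdots\wedge(\omega+dd^c\phi_n).
\end{equation}
So it remains to show $\int_{H_\infty}(\omega+dd^c\phi_1)\wedge\cdots\wedge(\omega+dd^c\phi_n) = 0$, i.e. that the mixed Monge-Amp\`ere measure puts no mass on the hyperplane at infinity. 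I would argue this by a local comparison near $H_\infty$: since each $\phi_i$ is locally bounded near $H_\infty$, the Bedford-Taylor theory applies, and the mass of $(\omega+dd^c\phi_1)\wedge\cdots\wedge(\omega+dd^c\phi_n)$ on a small neighborhood of a point of $H_\infty$ is controlled by the oscillations of the $\phi_i$ together with the geometry of $\omega$; more efficiently, one can use that $H_\infty$ is a pluripolar set and that the mixed Monge-Amp\`ere measure of locally bounded $\omega$-psh functions does not charge pluripolar sets (a standard Bedford-Taylor fact, which holds locally for $dd^c$ of bounded psh functions and transfers to the $\omega$-setting since $\omega = dd^c(\text{smooth})$ locally). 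Either route yields that the boundary hyperplane is negligible.

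Finally, to pin down that both sides equal $1$, I would compute $\int_{\C^n} dd^c u_1\wedge\cdots\wedge dd^c u_n$ directly when all $u_i = \tfrac12\log(1+|z|^2)$, in which case $dd^c u_i = \omega|_{\C^n}$ and the integral is $\int_{\C^n}\omega^n = \int_{\mathbb P^n}\omega^n - \int_{H_\infty}\omega^n = 1$; the general case then follows because the mixed mass $\int_{\mathbb P^n}(\omega+dd^c\phi_1)\wedge\cdots\wedge(\omega+dd^c\phi_n)$ depends only on the cohomology class $[\omega]$ (each $\phi_i$ being a global $\omega$-psh potential, hence a quasi-potential for the same class), so it is independent of the choice of $u_i \in \mathcal L_+(\C^n)$. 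The main obstacle I anticipate is the vanishing of the mass on $H_\infty$: one must be careful that although each individual $\omega+dd^c\phi_i$ is a well-behaved current, wedging $n$ of them requires the local boundedness near $H_\infty$ (which $\mathcal L_+$, but not $\mathcal L$ or even $\mathcal L_\star$, guarantees), and then that the resulting measure does not concentrate on the pluripolar divisor — this is exactly where the hypothesis $u_i \in \mathcal L_+(\C^n)$ rather than merely $\mathcal L(\C^n)$ is used.
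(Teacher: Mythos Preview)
Your proposal is correct and follows essentially the same route as the paper: boundedness of each $\phi_i$ near $H_\infty$ from the $\mathcal L_+$ condition, membership in $DMA_{loc}(\mathbb P^n,\omega)$ via \cite{CGZ08}, the cohomological total-mass identity on $\mathbb P^n$, the identification $\omega+dd^c\phi_i = dd^c u_i$ on $\mathcal U_0\simeq\C^n$, and the vanishing of the mixed Monge-Amp\`ere mass on the pluripolar set $H_\infty$ using Bedford--Taylor theory for locally bounded potentials. The paper's own proof is terser (it writes out only the non-mixed case $(\omega+dd^c\phi)^n$ and leaves the polarization implicit), but your more explicit treatment of the mixed measure and of why $H_\infty$ carries no mass is exactly the intended argument.
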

\begin{proof} Observe that if $u \in \mathcal L_+ (\C^n)$, its homogeneization  $\phi = \phi_u \in PSH (\mathbb{P}^n,\omega)$  is a bounded $\omega$-psh function in a neighbourhood of the hyperplane at infinity $H_{\infty}$.
Hence by \cite{CGZ08}, $ \mathcal L_+ (\C^n) \subset DMA_{loc} (\mathbb P^n,\omega)$ and its complex Monge-Amp\`ere measure is well defined and puts no mass on $H_{\infty}$. Hence
$$
1 = \int_{\mathbb{P}^n} (\omega + dd^c \phi)^n = \int_{\C^n} (\omega + dd^c \phi)^n.
$$
Since $\omega + dd^c \phi = dd^c u$ in the weak sense on $\C^n$, the formula (\ref{eq:totalmass}) follows.

\end{proof}

\section{Projective logarithmic Potentials on $\C^n$}

\subsection{ The euclidean logarithmic potential in $\C^n$}
Our main motivation is  to study projective logarithmic potentials on $\mathbb{P}^n$. It turns out that when localizing these potentials in affine coordinates, we end up with a kind of projective logarithmic potential on $\C^n$ which we shall study here. 

We first introduce the normalized  logarithmic kernel on $\C^n \times \C^n$ defined by
 $$
 K (z,w) := \frac{1}{2} \log\frac{\mid z-w\mid^{2}}{1+\mid w\mid^{2}}, \, \, \, (z,w) \in \C^n \times \C^n.
 $$
Let $\mu$ be a probability measure on $\C^n$. We define its logarithmic potential  as follows, for $z\in\mathbb{C}^{n}$ \begin{equation} \label{eq:potential}
 U_{\mu}(z) = \int_{\C^n} K (z,w) d \mu (w)
 =\frac{1}{2}\int_{\mathbb{C}^{n}} \log\frac{\mid z-w\mid^{2}}{1+\mid w\mid^{2}} d\mu(w)
 \end{equation}
It is well known that for any $w \in \C^n$, the function $K_w := K (\cdot,w)$ is plurisubharmonic in $\C^n$, $K_w \in \mathcal L_+ (\C^n)$ and satisfies the complex Monge-Amp\`ere equation
 $$
 (dd^c K (\cdot,w))^n = \delta_w,
 $$
 in the sense of currents on $\C^n$, where $\delta_w$ is the unit Dirac mass at $w$.
\begin{theo}  \label{thm:logpot}
 Let $\mu$ be a probability measure on $\C^n$. Then  for any $z \in \mathbb{C}^n$,
 $$
 U_{\mu}(z) \leq  \frac{1}{2} \log (1 + \vert z\vert^2).
 $$
 and if $\sigma_{2 n - 1}$ is the normalized Lebesgue measure on the unit sphere $\mathbb S^{2 n - 1}$,

 $$
 \int_{\{\vert z \vert = 1\}}  U_{\mu}(z)  d \sigma_{2 n - 1} (z) \geq -  \log \sqrt{5}.
 $$
 In particular $U_\mu \in \mathcal L (\C^n)$.

 If moreover $\mu$ safisfies the following logarithmic moment condition at infinity i.e.
 $$
 \int_{\C^n} \log (1 + \vert w \vert^2)  d \mu (w) < + \infty,
 $$
 then $U_{\mu} \in \mathcal L _\star (\C^n)$ and its Robin function $\rho_\mu := \rho_{U_{\mu}}$ satisfies the lower bound
 $$
  \int_{\mathbb{P}^{n - 1}}\rho_\mu (\zeta) \, \omega_{n - 1}  \geq - (1 \slash 2) \int_{\C^n} \log (1 + \vert w \vert^2)  d \mu (w),
 $$
 where $\omega_{n - 1}$ is the Fubini-Study volume form on $\mathbb{P}^{n - 1}$.
\end{theo}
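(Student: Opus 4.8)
I would prove the four assertions of Theorem~\ref{thm:logpot} in turn: the first two by direct estimates on the kernel $K(z,w)=\frac12\log\frac{|z-w|^2}{1+|w|^2}$, the last two by combining these with standard facts on plurisubharmonic functions. \emph{Upper bound.} For fixed $z,w\in\C^n$, the Lagrange identity \eqref{eq:Lagrange2} gives $\frac{|z-w|^2+|z\we w|^2}{(1+|z|^2)(1+|w|^2)}\le 1$, hence $|z-w|^2\le(1+|z|^2)(1+|w|^2)$ and $K(z,w)\le\frac12\log(1+|z|^2)$; since this bound is uniform in $w$, integrating against the probability measure $\mu$ yields $U_\mu(z)\le\frac12\log(1+|z|^2)$.

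\emph{Spherical average and Lelong class.} By the previous step $K\le\frac12\log 2$ on $\mathbb{S}^{2n-1}\times\C^n$, so Tonelli applied to $\frac12\log 2-K\ge 0$ legitimates $\int_{\mathbb{S}^{2n-1}}U_\mu\,d\sigma_{2n-1}=\int_{\C^n}v(w)\,d\mu(w)$, where $v(w):=\int_{\mathbb{S}^{2n-1}}K(z,w)\,d\sigma_{2n-1}(z)=\frac12 I(|w|)-\frac12\log(1+|w|^2)$ and $I(s):=\int_{\mathbb{S}^{2n-1}}\log|z-w|^2\,d\sigma_{2n-1}(z)$ depends only on $s=|w|$ by unitary invariance. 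It then suffices to prove $v(w)\ge-\log\sqrt5$ pointwise, and I would use two inputs. First, $I(s)\ge 0$ for all $s$: since $w\mapsto\log|z-w|^2$ is plurisubharmonic on $\C^n$ for each fixed $z$, its mean over the sphere $\{|w|=s\}$ centered at $0$ dominates its value $\log|z|^2$ at $0$, and averaging this over $z\in\mathbb{S}^{2n-1}$ (a further legitimate exchange, $I$ being constant on $\{|w|=s\}$) gives $I(s)\ge\int_{\mathbb{S}^{2n-1}}\log|z|^2\,d\sigma_{2n-1}=0$. Second, for $|w|\ge 2$ and $|z|=1$ one has $|z-w|\ge|w|-1\ge|w|/2$, whence $I(|w|)\ge\log(|w|^2/4)$. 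Then, if $|w|\le 2$, $v(w)\ge-\frac12\log(1+|w|^2)\ge-\log\sqrt5$; and if $|w|\ge 2$, using $|w|^2\ge\frac45(1+|w|^2)$, $v(w)\ge\frac12\log\frac{|w|^2}{4(1+|w|^2)}\ge-\log\sqrt5$. Hence $\int v\,d\mu\ge-\log\sqrt5$. Finally, $U_\mu$ is an integral of the plurisubharmonic functions $K(\cdot,w)$, it is locally bounded above by the first step and $\not\equiv-\infty$ by the average bound just proved, so it is plurisubharmonic; together with the growth estimate this gives $U_\mu\in\mathcal{L}(\C^n)$.

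\emph{Robin function.} Assume $\int_{\C^n}\log(1+|w|^2)\,d\mu<\infty$ and set $c_\mu:=-\frac12\int_{\C^n}\log(1+|w|^2)\,d\mu$; I would in fact show $\rho_\mu\equiv c_\mu$, which is stronger than claimed. Fix a unit representative $\xi$ and decompose $w=a\xi+w'$ with $a=w\cdot\bar\xi$, $w'\cdot\bar\xi=0$, so that $|\lambda\xi-w|^2=|\lambda-a|^2+|w'|^2$. The classical circle mean $\frac{1}{2\pi}\int_0^{2\pi}\log|Re^{i\theta}-a|\,d\theta=\log\max(R,|a|)\ge\log R$ forces $\varepsilon_R(w):=\frac{1}{2\pi}\int_0^{2\pi}\frac12\log\frac{|Re^{i\theta}\xi-w|^2}{R^2}\,d\theta\ge 0$; moreover $\varepsilon_R(w)\to 0$ as $R\to\infty$ and, for $R\ge 1$, $\varepsilon_R(w)\le\frac12\log(2(1+|w|^2))$, a $\mu$-integrable bound. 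Exchanging $\int_{\C^n}$ with the circle mean (Tonelli, the integrand being bounded above on $\{|\lambda|=R\}\times\C^n$) yields
\[
\frac{1}{2\pi}\int_0^{2\pi}\bigl(U_\mu(Re^{i\theta}\xi)-\log R\bigr)\,d\theta = c_\mu+\int_{\C^n}\varepsilon_R(w)\,d\mu(w).
\]
Since $\rho_\mu(\xi)=\limsup_{\lambda\to\infty}(U_\mu(\lambda\xi)-\log|\lambda\xi|)$ dominates the $\limsup_{R\to\infty}$ of these circle means and $\int\varepsilon_R\,d\mu\to 0$ by dominated convergence, we get $\rho_\mu(\xi)\ge c_\mu$ for every $\xi$; conversely, reverse Fatou applied to $\limsup_{\lambda\to\infty}\int_{\C^n}\frac12\log\frac{|\lambda\xi-w|^2}{(1+|w|^2)|\lambda\xi|^2}\,d\mu(w)$, using the uniform upper bound $\frac12\log 2$ on the integrand for $|\lambda\xi|\ge 1$, gives $\rho_\mu(\xi)\le c_\mu$. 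Thus $\rho_\mu\equiv c_\mu$; in particular $\rho_\mu\not\equiv-\infty$, so $U_\mu\in\mathcal{L}_\star(\C^n)$, and integrating over $\mathbb{P}^{n-1}$ (where $\int_{\mathbb{P}^{n-1}}\omega_{n-1}=1$) gives the stated lower bound, with equality.

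\emph{Main difficulty.} The two kernel estimates and the Tonelli exchanges are routine; the genuinely delicate step is the lower bound for the Robin function, where the potential must be controlled uniformly along rays tending to infinity. The device that makes it work is to pass to circle averages in the dilation parameter $\lambda$: there the elementary identity $\frac{1}{2\pi}\int_0^{2\pi}\log|Re^{i\theta}-a|\,d\theta=\log\max(R,|a|)$ forces the error term $\varepsilon_R$ to be nonnegative and, being also dominated by a $\mu$-integrable function, to vanish in the limit under the integral sign.
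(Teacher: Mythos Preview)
Your proof is correct and in places sharper than the paper's. The upper bound (Part~1) is identical to the paper's argument via the Lagrange identity. The remaining parts differ.

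\textbf{Spherical average.} The paper fixes $|z|=1$, splits the $w$-integral at $|w|=\delta$, bounds the small-$|w|$ piece by the triangle inequality $|z-w|\ge 1-\delta$, and then averages in $z$ using the submean-value inequality on the large-$|w|$ piece; the choice $\delta=1/2$ produces $-\log\sqrt5$. You instead exchange the integrals first and prove the \emph{pointwise} bound $v(w)\ge-\log\sqrt5$, splitting at $|w|=2$ and using $I(s)\ge0$ (obtained by the same submean-value idea). Both arguments are short; yours is a bit more transparent because it isolates $v(w)$ and bounds it uniformly.

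\textbf{Robin function.} The paper is terse here: it notes $\rho_\mu\le0$, that $\log|\lambda z-w|^2-\log|\lambda z|^2\to 0$, and says ``apply Fatou's lemma'' to conclude the integral lower bound. Your route is genuinely different and proves more: by passing to circle means in the dilation variable $\lambda$ you exploit the exact identity $\frac{1}{2\pi}\int_0^{2\pi}\log|Re^{i\theta}-a|\,d\theta=\log\max(R,|a|)$ to make the error $\varepsilon_R(w)$ nonnegative and dominated, and then dominated convergence yields the pointwise identity $\rho_\mu\equiv c_\mu:=-\tfrac12\int\log(1+|w|^2)\,d\mu$. This is stronger than the stated lower bound on $\int_{\mathbb P^{n-1}}\rho_\mu\,\omega_{n-1}$ (which it of course implies, with equality), and it makes the membership $U_\mu\in\mathcal L_\star(\C^n)$ immediate without any averaging over $\mathbb P^{n-1}$. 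The price is the extra device of circle averages; what you gain is a fully explicit, constant Robin function.
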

\begin{proof}
 By (\ref{eq:Lagrange2}),  for any  $z, w \in \C^n$,

 $$
 \frac{\mid z-w\mid^{2}}{(1 + \vert z\vert^2) (1+\mid w\mid^{2})} \leq 1 - \frac{\vert 1 + z \cdot \bar {w} \vert^2}{(1 + \vert z\vert^2) (1+\mid w\mid^{2})} .
 $$

 Then for $z, w \in \C^n$,
 $$
 K (z,w) :=\frac{1}{2} \log\frac{\mid z-w\mid^{2}}{1+\mid w\mid^{2}}  \leq  \frac{1}{2} \log (1 + \vert z\vert^2).
 $$

 For any fixed $w \in \C^n$, the function  $z \longrightarrow K (z,w)$ is plurisubharmonic in $\C^n$.
 It follows that  $U_\mu \in \mathcal L (\C^n)$ provided that  we prove that $U_\mu \not \equiv - \infty$.

 To prove the last statement,   write for $z \in \C^n$
 $$
2 \,  U_{\mu}(z) = I_1 (z) + I_2 (z),
 $$
 where
 $$
 I_1 (z) := \int_{\vert w\vert \leq 2}  \log\frac{\mid z-w\mid^{2}}{1+\mid w\mid^{2}} d\mu(w),
 $$
 and

 $$
 I_2 (z) := \int_{\vert w\vert >  2}  \log\frac{\mid z-w\mid^{2}}{1 + \mid w\mid^{2}}   d\mu(w).
 $$
Let us prove that  $I_1 \not \equiv - \infty$ and $I_2 \not \equiv - \infty$. For the first integral observe that for $\vert z\vert \geq 3,$ we have
$$
 I_1 (z) \geq   -  \mu (\B (0,2)) \log  5 > - \infty.
 $$
 For the second integral, observe that for $\vert z \vert \leq  1$,  $\frac{(\vert w\vert - 1)^2}{1 + \mid w\mid^{2}} \geq \frac{1}{5}$ for $\vert w \vert \geq 2$. Hence
 for $\vert z \vert < 1$,
 $$
  I_2 (z) \geq  \int_{\vert w\vert >  2}  \log\frac{(\vert w\vert - 1)^2}{1 + \mid w\mid^{2}}  d \mu (w) \geq - \log 5 > - \infty.
 $$
Therefore $I_1$ and $I_2$ belongs to $PSH (\C^n) \subset L^1_{loc} (\C^n)$ and then $U_{\mu} \in PSH (\C^n)$.
 This proves that $U_{\mu}  \in \mathcal L (\C^n).$\\
Then for $z, w \in \C^n$,
 $$
 \log\frac{\mid z-w\mid^{2}}{1+\mid w\mid^{2}}  \geq  \log\frac{\mid \vert z \vert - \vert w\vert \mid^{2}}{1+\mid w\mid^{2}}.
 $$
 Fix $0 < \delta< 1$ and $z \in \C^n$ such that $ \vert z\vert = 1$. Then splitting the integral defining $U_\mu$ in two parts, we get  :
 $$
2  U_{\mu} (z) \geq   \log\frac{ (1- \delta)^2}{ 1 + \delta^{2}}  \mu (\B (0,\delta))
  + \int_{\vert w \vert \geq \delta}  \log\frac{\vert  z - w\vert^2}{ 1 + \mid w\mid^{2} }  d \mu (w)
 $$
 Hence by submean value inequality, we obtain
 \begin{eqnarray*}
2 \,  \int_{\vert z \vert = 1 }  U_{\mu} (\xi) d \sigma_{2 n - 1} (\xi)
 & \geq  & \log\frac{ (1- \delta)^2}{ 1 + \delta^{2}}  \mu (\B (0,\delta))  +  \int_{\vert w \vert \geq  \delta}  \log\frac{\vert  w\vert^2}{ 1 + \mid w\mid^{2}}  d \mu (w) \\
 & \geq &  \log\frac{ (1- \delta)^2}{ 1 + \delta^{2}} \mu (\B (0,\delta))  +  \mu (\{\vert w \vert \geq \delta\})  \log\frac{\delta^2}{1 + \delta^{2}}
 \end{eqnarray*}
Then taking $\delta= 1 \slash 2$ we obtain
$$
\int_{\vert z\vert = 1 }  U_{\mu} (z) d \sigma_{2 n - 1} (z)  \geq  - \log \sqrt{5}.
$$

This also implies that  $ U_{\mu} \not \equiv - \infty$.

 To prove the last statement observe that $\rho_\mu \leq 0$ on $\C^n$ and
 $$
 \lim_{\vert \lambda \vert \to + \infty} (\log \vert \lambda z-w\vert^{2} - \log \vert \lambda z\vert^{2}) = 0.
 $$
 Then apply Fatou's lemma to obtain the conclusion. 
\end{proof}

\subsection{ Riesz potentials}
Here we prove a technical lemma which is certainly well known but since we cannot find the right reference for it, we will give all the details needed in the sequel. Here we work in the euclidean space $\R^N$ with its usual scalar product and its associated euclidean norm $\Vert \cdot \Vert$.

Let $\mu$ be a probability measure with compact support on $\R^{N}$. Define its Riesz potentials  by
$$
 J_{\mu, \alpha}(x) :=   \int_{\mathbb{C}^{n}} \frac{d\mu(w)}{|x-y|^{\alpha}} = \mu \star J_\alpha (x), \, x \in \R^N,
 $$
 where
 $$
 J_\alpha (x) := \frac{1}{\vert x\vert ^\alpha}, \, \, x \in \R^N.
 $$
 Observe that  $J_\alpha \in L^p_{loc} (\R^N)$ if and only if $0 < p  < N \slash \alpha$.

 Given a probability measure $\mu$ on $\R^N$, there are many different notions of dimension for the measure $\mu$. Here we use the following one (see  \cite{LMW02}). We define the {\it L\'evy concentration} functions of $\mu$ as follows:
 $$
\mu (x,r) := \mu (B (x,r)),  \, \, Q_\mu (r) := \sup \{ \mu (x,r) ; x \in \mrm{Supp} \mu \},
 $$
where $B (x,r)$ is the euclidean (open)  ball of center $x$ and radius $r > 0$.

   The  lower {\it  concentration dimension} of $\mu$ is given by the following formula:
\begin{equation} \label{eq:dim}
 \gamma(\mu) = \gamma_-(\mu):= \liminf_{r \to 0^+} \frac{\log Q_\mu (r)}{\log r}\cdot
\end{equation}

 We will call it for convenience the dimension of the measure $\mu$. The following property is well known.
 
\begin{lem}\label{lem:dimension} Let $\mu$ be a probability measure $\mu$ with compact support on $\C^n$. Then  its dimension $\gamma(\mu)$ is  the supremum of all the exponents $\gamma\geq  0$ for which the following estimates are staisfied: there exists $C > 0$ such that $\forall x \in \R^N, \, \, \forall r \in ] 0,1[$, 
 $$
 \mu (B (x,r)) \leq C r^{\gamma}. 
 $$
Moreover we have $0 \leq \gamma(\mu) \leq N$.

\end{lem}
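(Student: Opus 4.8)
The plan is to establish the two directions of the characterization of $\gamma(\mu)$ separately, then deduce the bound $0 \le \gamma(\mu) \le N$.

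\textbf{Proof proposal.} Throughout, identify $\C^n \simeq \R^N$ with $N = 2n$, and let $\gamma_\star$ denote the supremum described in the statement; note $\gamma_\star \geq 0$ automatically, since the exponent $\gamma = 0$ is admissible with $C = 1$ ($\mu$ being a probability measure). The plan is to prove the two inequalities $\gamma_\star \leq \gamma(\mu)$ and $\gamma(\mu) \leq \gamma_\star$ separately, and then to obtain $\gamma(\mu) \leq N$ by a covering argument using the compactness of $\mrm{Supp}\,\mu$.

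For $\gamma_\star \leq \gamma(\mu)$: if $\gamma \geq 0$ is admissible, then $\mu(B(x,r)) \leq C r^\gamma$ for all $x \in \R^N$ and $r \in \,]0,1[$; restricting $x$ to $\mrm{Supp}\,\mu$ gives $Q_\mu(r) \leq C r^\gamma$, hence $\log Q_\mu(r) \leq \log C + \gamma \log r$, and dividing by $\log r < 0$ and letting $r \to 0^+$ yields $\gamma(\mu) = \liminf_{r\to 0^+}\frac{\log Q_\mu(r)}{\log r} \geq \gamma$. Taking the supremum over admissible $\gamma$ gives $\gamma(\mu) \geq \gamma_\star$. For the reverse inequality, fix $\e > 0$ with $\e < \gamma(\mu)$ (if $\gamma(\mu) = 0$ nothing is needed). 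By definition of the liminf there is $r_0 \in \,]0,1[$ with $Q_\mu(r) \leq r^{\gamma(\mu)-\e}$ for all $r < r_0$. The key elementary point is that for an arbitrary $x \in \R^N$, either $B(x,r) \cap \mrm{Supp}\,\mu = \emptyset$, in which case $\mu(B(x,r)) = 0$, or there is $y \in B(x,r) \cap \mrm{Supp}\,\mu$ with $B(x,r) \subset B(y,2r)$, so that $\mu(B(x,r)) \leq \mu(B(y,2r)) \leq Q_\mu(2r)$. Hence for $r < r_0/2$ one gets $\mu(B(x,r)) \leq (2r)^{\gamma(\mu)-\e} = 2^{\gamma(\mu)-\e}\, r^{\gamma(\mu)-\e}$, while for $r_0/2 \leq r < 1$ one uses the crude bound $\mu(B(x,r)) \leq 1 \leq (r_0/2)^{-(\gamma(\mu)-\e)}\, r^{\gamma(\mu)-\e}$. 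Combining, $\gamma(\mu) - \e$ is admissible with a suitable constant, so $\gamma_\star \geq \gamma(\mu) - \e$; letting $\e \to 0$ gives $\gamma_\star \geq \gamma(\mu)$, and therefore $\gamma(\mu) = \gamma_\star$.

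Finally, $\gamma(\mu) \geq 0$ has already been observed. For the upper bound, pick $R > 0$ with $\mrm{Supp}\,\mu \subset B(0,R)$; for $r \in \,]0,R[$ the ball $B(0,R)$ is covered by at most $c_N (R/r)^N$ balls of radius $r$ (with $c_N$ depending only on $N$), and, exactly as above, each such ball has $\mu$-mass at most $Q_\mu(2r)$. Since $\mu(B(0,R)) = 1$ this gives $1 \leq c_N (R/r)^N Q_\mu(2r)$, i.e. $Q_\mu(2r) \geq c_N^{-1} R^{-N} r^N$; taking logarithms, dividing by $\log(2r) < 0$, and passing to the liminf yields $\gamma(\mu) \leq N$.

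The argument is essentially routine; the only mild subtleties are the passage from the supremum over $\mrm{Supp}\,\mu$ in the definition of $Q_\mu$ to a bound valid for all centers $x \in \R^N$ (handled by the doubling inclusion $B(x,r) \subset B(y,2r)$), and the need to cover the whole range $r \in \,]0,1[$ uniformly rather than only the regime $r \to 0^+$ (handled by the trivial bound $\mu \leq 1$ for $r$ bounded away from $0$). Neither of these is a genuine obstacle, and compactness of $\mrm{Supp}\,\mu$ enters only through the finite covering in the last step.
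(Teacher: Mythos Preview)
Your proof is correct. The paper dismisses the characterization of $\gamma(\mu)$ as a supremum as ``obvious'' and gives no details; you supply them carefully, including the doubling inclusion $B(x,r)\subset B(y,2r)$ to pass from centers in $\mrm{Supp}\,\mu$ to arbitrary centers, and the trivial bound to handle $r$ bounded away from $0$.

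For the upper bound $\gamma(\mu)\le N$ the two arguments genuinely differ. The paper establishes via Fubini the identity
\[
\int_{\R^N}\mu(x,r)\,dx=\tau_N\,r^N
\]
and from it extracts a lower bound on $Q_\mu(r)$ of order $r^N$; this identity is recorded as equation~(\ref{eq:concentration}) and reused later to estimate $\|\mu(\cdot,r)\|_p$ in the proof of Lemma~\ref{lem:Rieszpotential}. You instead use a direct covering argument: cover the compact support by $\simeq (R/r)^N$ balls of radius $r$, each of $\mu$-mass at most $Q_\mu(2r)$, and use that the total mass is $1$. Your argument is entirely self-contained and perhaps more transparent for this particular lemma; the paper's integral identity has the advantage of doing double duty in the subsequent Riesz-potential estimate.
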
 

\begin{proof} The first statement is obvious and we have $\gamma(\mu) \geq 0$ since $\mu$ has a finite mass. To prove the upper bound, observe that for any fixed $r > 0$, the function $x \longmapsto \mu (x,r)$ is a non negative Borel function on $\R^n$. By Fubini's Theorem, we have for any $r > 0$,
\begin{eqnarray*} \label{eq:concentration}
\int_{\R^N} \mu (x,r) d x  & = & \int_{\R^N} \left(\int_{y \in B (x,r)} d \mu (y)\right) d x \nonumber \\
& = & \int_{\R^N} \left(\int_{B (y,r)}  d x \right) d \mu (y).
\end{eqnarray*}
If we denote by $\tau_N$ the volume of the euclidean unit ball in $\R^N$, we obtain
\begin{eqnarray} \label{eq:concentration}
\int_{\R^N} \mu (x,r) d x  =  \tau_{N} r^{N}.
\end{eqnarray}

Therefore for any $r > 0$, we have 
$$
\tau_{N} r^{N} \leq Q_\mu (r) \, \mu (\R^N)  =  Q_\mu (r) ,
$$
which implies immediately that $\gamma(\mu) \leq N$.
\end{proof}

\begin{exa}
1. Then the measure $\mu$ has no atom in $\R^N$  if and only if $\gamma (\mu) > 0$.

 2. Let $0 < k \leq N$ be any real number and let $A \subset \R^N$ be any  Borel subset such that its $k$-dimensional Hausdorff measure satisfies  $0 < \lambda_k (A) < + \infty$. Then the restricted measure $\lambda_{k, A} := {\bf 1}_A  \lambda_k$ is a Borel measure of dimension $k$. In particular
 the $N$-dimensional Lebesgue measure $\lambda_{k, A}$ has dimension $N$.
 
\end{exa} 
\begin{lem}\label{lem:Rieszpotential}
 Let $\mu$ be a probability measure with compact support on $\R^N$ and $0 < \alpha < N$ and  $\gamma:= \gamma(\mu)$ its dimension. Then

  $$
  J_{\mu, \alpha} \in L^{p}_{Loc}(\R^N)\quad \, \, \hbox{if} \quad \, \, 1  < p <\frac{N - \gamma}{(\alpha - \gamma)_+},
 $$
 where $x_+ := \max \{x,0\}$ for a real number $x \in \R$.
 \end{lem}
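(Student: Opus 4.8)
The plan is to reduce the assertion to a uniform bound on $\int_{B}J_{\mu,\alpha}^{p}\,dx$ over fixed balls $B\subset\R^{N}$ — legitimate since the statement is local and $J_{\mu,\alpha}\geq 0$ — and to obtain that bound from a single interpolation in the kernel. First I would record an elementary boundedness fact: \emph{if $0<\beta<s$ and $\mu$ satisfies $\mu(B(x,r))\leq C\,r^{s}$ for all $x$ and all $r\in\,]0,1[$, then $J_{\mu,\beta}$ is bounded on $\R^{N}$}. This follows by splitting $\int|x-y|^{-\beta}d\mu(y)$ into the part over $\{|x-y|\geq 1\}$, which is at most $\mu(\R^{N})=1$, and the dyadic annuli $\{2^{-j-1}\leq|x-y|<2^{-j}\}$, $j\geq 0$, on which the $j$-th piece is at most $2^{(j+1)\beta}\mu(B(x,2^{-j}))\leq C\,2^{\beta}\,2^{j(\beta-s)}$; the geometric series converges, uniformly in $x$, because $\beta<s$. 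By Lemma \ref{lem:dimension} such a concentration bound holds for every $s<\gamma$, so $J_{\mu,\beta}$ is bounded whenever $0<\beta<\gamma$. In particular, if $\alpha<\gamma$ then $J_{\mu,\alpha}$ is bounded, hence in $L^{p}_{loc}(\R^{N})$ for every $p$, which settles that case (the critical exponent being $+\infty$). From now on I assume $\alpha\geq\gamma$, so that $\gamma\leq\alpha<N$.

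The main step is a splitting of the kernel. For a parameter $\epsilon\geq 0$ to be chosen, write $|x-y|^{-\alpha}=|x-y|^{-(\alpha-\epsilon)}\cdot|x-y|^{-\epsilon}$ and apply Hölder's inequality in $d\mu(y)$ with conjugate exponents $p$ and $p':=p/(p-1)$:
$$
J_{\mu,\alpha}(x)\leq\Big(\int_{\R^{N}}|x-y|^{-(\alpha-\epsilon)p}\,d\mu(y)\Big)^{1/p}\,\big(J_{\mu,\epsilon p'}(x)\big)^{1/p'}.
$$
When $\epsilon p'<\gamma$, the boundedness fact makes the last factor $\leq M^{1/p'}$ with $M:=\|J_{\mu,\epsilon p'}\|_{\infty}<\infty$; raising to the power $p$, integrating over $B$ and applying Tonelli's theorem, the claim $J_{\mu,\alpha}\in L^{p}(B)$ is reduced to the finiteness of $\int_{\R^{N}}\big(\int_{B}|x-y|^{-(\alpha-\epsilon)p}\,dx\big)\,d\mu(y)$. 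Since $B$ is bounded, the inner integral is bounded independently of $y$ as soon as $(\alpha-\epsilon)p<N$, and then the double integral is finite because $\mu(\R^{N})=1$.

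It remains to choose $\epsilon$. If $\alpha p<N$, take $\epsilon=0$: Hölder's inequality then degenerates to Jensen's, $J_{\mu,\alpha}(x)^{p}\leq\int|x-y|^{-\alpha p}d\mu(y)$, and Tonelli integrates this over $B$ since $\alpha p<N$. If instead $\alpha p\geq N$ — in which case $\gamma>0$ necessarily, since otherwise the admissible range of $p$ would be $]1,N/\alpha[$ — the two requirements $\epsilon p'<\gamma$ and $(\alpha-\epsilon)p<N$ read $\alpha-N/p<\epsilon<\gamma/p'$, and such an $\epsilon>0$ exists if and only if $\alpha-N/p<\gamma/p'$, i.e. (multiplying by $p$ and using $p/p'=p-1$) $p(\alpha-\gamma)<N-\gamma$ — which is our hypothesis when $\alpha>\gamma$, and automatic when $\alpha=\gamma$ since then $N-\gamma=N-\alpha>0$. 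Picking such an $\epsilon$ and an exponent $s$ with $\epsilon p'<s<\gamma$ for which $\mu(B(x,r))\leq C_{s}r^{s}$ holds (Lemma \ref{lem:dimension}), the previous paragraph applies; letting $B$ exhaust $\R^{N}$ yields $J_{\mu,\alpha}\in L^{p}_{loc}(\R^{N})$.

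I expect the only genuinely non-routine point to be the choice of the splitting exponent $\epsilon$: it is the device that trades a small, controlled power $\epsilon$ of the singularity $|x-y|^{-\alpha}$ against the dimension of $\mu$, and it is exactly what upgrades the crude threshold $N/\alpha$ — the best one gets from Minkowski's or Jensen's inequality, which register only the total mass of $\mu$ — to the sharp value $(N-\gamma)/(\alpha-\gamma)$. The dyadic estimate and the Tonelli computation are routine, and the hypothesis $p>1$ enters only through the conjugate exponent $p'$, the case $p=1$ being immediate from Tonelli since $\alpha<N$.
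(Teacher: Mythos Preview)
Your proof is correct and takes a genuinely different route from the paper's. The paper represents $J_{\mu,\alpha}$ via the layer-cake (Cavalieri) formula $J_{\mu,\alpha}(x)=\alpha\int_0^\infty \mu(B(x,r))\,r^{-\alpha-1}\,dr$, then applies Minkowski's integral inequality to get $\|J_{\mu,\alpha}\|_p\leq\alpha\int_0^\infty\|\mu(\cdot,r)\|_p\,r^{-\alpha-1}\,dr$, and finally bounds $\|\mu(\cdot,r)\|_p^p\leq Q_\mu(r)^{p-1}\int\mu(x,r)\,dx=Q_\mu(r)^{p-1}\tau_N r^N$ using the identity (\ref{eq:concentration}); the dimension estimate $Q_\mu(r)\lesssim r^\gamma$ for small $r$ then makes the resulting integral converge exactly when $p(\alpha-\gamma)<N-\gamma$. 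Your argument instead splits the kernel via H\"older and trades a power $|x-y|^{-\epsilon p'}$ against the dimension through the dyadic boundedness lemma, reducing matters to a single Tonelli integration of $|x-y|^{-(\alpha-\epsilon)p}$ in $dx$. Both arrive at the same arithmetic condition $p(\alpha-\gamma)<N-\gamma$; the paper's approach is tied more directly to the concentration function $Q_\mu$ and to the averaging identity (\ref{eq:concentration}), while yours is more elementary---it avoids Minkowski's integral inequality and the layer-cake representation entirely---and makes the provenance of the critical exponent transparent through the two constraints $\epsilon p'<\gamma$ and $(\alpha-\epsilon)p<N$ on the splitting parameter.
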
 
\begin{proof}
We follow an idea from \cite{P16} where the case when $\alpha = N - 1$ is considered (see \cite[Proof of Lemma 10.12]{P16}). For convenience, we give all the details here.

By the Cavalieri principle for any fixed $x \in \R^N$,we have
$$
J_{\mu,\alpha} (x) = \alpha  \int_0^{+ \infty} \mu (x,r) \frac{d r}{r ^{\alpha + 1}}.
$$

Then by Minkowski inequality, we obtain
$$
\Vert  \mu \star J_\alpha \Vert_{p} \leq \alpha  \int_0^{+ \infty}  \Vert \mu (\cdot,r)\Vert_p \frac{d r}{r ^{\alpha + 1}},
$$
here $\Vert \cdot \Vert_p$ means the $L^p$-norm with respect to the Lebesgue measure on $\R^N$.

Recall that  $Q_\mu (r) := \sup_{x} \mu (x,r)$ for $r > 0$. This is a bounded Borel function on $\R^+$ such that $0 \leq Q_\mu (r) \leq 1$ since $\mu$ is a Probability measure. 
For $ p >  1$ fixed, we can write for any $x \in \R^N$ and $r > 0$,
\begin{eqnarray*}
 \mu (x,r)^p & = &  \mu (x,r)^{p- 1}  \mu (x,r) \leq Q_\mu (r)^{p - 1} \mu (x,r) \\
 & \leq & \min \{Q_\mu (r), 1\}^{p - 1} \mu (x,r).
\end{eqnarray*}

Then by (\ref{eq:concentration}) we get, 
\begin{eqnarray*}
\Vert \mu (\cdot,r)\Vert_p^p & = & \int_{\R^N} \mu (x,r)^p d x \\
& \leq &  \min \{Q_\mu (r), 1\}^{p - 1} \int_{\R^N} \mu (x,r) d x \\
& \leq & C \min \{Q_\mu (r), 1\}^{p - 1} r^{ 2 n}.
\end{eqnarray*}

Therefore for any fixed $\kappa > 0$, we have

\begin{eqnarray*}
\Vert  \mu \star J_\alpha \Vert_{p} & \leq &  C \alpha  \int_0^{\kappa}  Q_\mu (r)^{(p - 1) \slash p} \, r^{ 2 n \slash p}\frac{d r}{r ^{\alpha + 1}} \\
& + & \int_{\kappa}^{\infty}  r^{ 2 n \slash p}\frac{d r}{r ^{\alpha + 1}}.
\end{eqnarray*}

Using the estimate on $Q_\mu$ in terms of the dimension and minimizing in $\kappa > 0$, we get the required result.
 \end{proof}


\subsection{The projective logarithmic kernel}
Our main motivation is  to study projective logarithmic potentials on $\mathbb{P}^n$. It turns out that when localizing these potentials in affine coordinates, we end up with a kind of projective logarithmic potential on $\C^n$ which we shall study first.

 Recall the definition of the logarithmic kernel from the previous section,
 $$
 K (z,w) := \frac{1}{2} \log\frac{\mid z-w\mid^{2}}{1+\mid w\mid^{2}},
 $$
 The projective logarithmic kernel on $\C^n \times \C^n$ is defined by the following formula:
 $$
 N (z,w) := \frac{1}{2} \log\frac{\mid z-w\mid^{2} + \vert z \wedge w \vert^2}{1+\mid w\mid^{2}} , \, \, (z,w) \in \C^n \times \C^n.
 $$
 \begin{lem} \label{lem:logKern}  1. The kernel $N$ is  upper semi-continuous in $\C^n \times \C^n$ and smooth off the diagonal of  $\C^n \times \C^n$.

 2. For any fixed $w \in \C^n$, the function  $N_w := N (\cdot,w) : z \longmapsto N (z,w)$ is plurisubharmonic in $\C^n$ and satisfies the following inequality
 $$
 K (z,w) \leq N (z,w) \leq (1 \slash 2) \log (1 + \vert z\vert^2), \, \, \forall (z,w) \in \C^n \times \C^n
 $$
 hence for any $w \in \C^n$,  $N (\cdot,w) \in \mathcal L_+ (\C^n)$.

 3. The kernel $N$  has a logarithmic singularity along the diagonal i.e. for any $(z,w) \in \C^n \times \C^n$,
 $$
 0 \leq  N (z,w)  -  K (z,w) \leq  \frac{1}{2} \log (1 + \min \{\vert z\vert ^2, \vert w\vert ^2\}).
 $$

4. For any $w \in \C^n$,
 $$
 (dd^c N_w)^n = \delta_w,
 $$
 where $\delta_w$ is the unit Dirac measure on $\C^n$  at the point $w$.
\end{lem}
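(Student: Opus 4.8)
The whole lemma will rest on one algebraic identity, so I would establish that first. Put $\zeta := (1,z)$ and $\eta := (1,w)$ in $\C^{n+1}$. A direct computation of the components of $\zeta \wedge \eta$ — equivalently, combining the Lagrange identities (\ref{eq:Lagrange}) and (\ref{eq:Lagrange2}) — gives $|z-w|^2 + |z \wedge w|^2 = |\zeta \wedge \eta|^2$. Since $\zeta \wedge \eta = (\zeta-\eta)\wedge\eta = (0,z-w)\wedge \eta$, the map $L_\eta : \C^n \longrightarrow \bigwedge^2 \C^{n+1}$, $u \mapsto (0,u)\wedge \eta$, is $\C$-linear and injective (because $(0,u)$ is proportional to $\eta=(1,w)$ only for $u=0$), so $u\mapsto|L_\eta u|^2$ is a positive definite Hermitian form on $\C^n$; writing it as $|B_w u|^2$ with $B_w \in GL_n(\C)$ I would record the key identity
\begin{equation*}
N(z,w) = \log |B_w(z-w)| - \tfrac12 \log (1+|w|^2).
\end{equation*}
In words, $N_w$ is, up to the additive constant $-\tfrac12\log(1+|w|^2)$, the pullback of $v \mapsto \log|v|$ under the affine automorphism $T_w : z \mapsto B_w(z-w)$ of $\C^n$.

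Granting this, parts 1--3 are elementary. For 1, the numerator $|z-w|^2+|z\wedge w|^2$ is a nonnegative real-analytic function on $\C^n\times\C^n$ vanishing \emph{exactly} on the diagonal (since $|z-w|^2=0$ forces $z=w$, and then $z\wedge w = 0$ too), so $N$ is continuous, hence smooth, where the numerator is positive, equals $-\infty$ on the diagonal, and is therefore upper semicontinuous on $\C^n\times\C^n$ and smooth off the diagonal. For 2, $z\mapsto\log|B_w(z-w)|$ is plurisubharmonic and $\not\equiv-\infty$, so $N_w\in PSH(\C^n)$; the bound $K(z,w)\le N(z,w)$ is immediate from the definitions, and $N(z,w)=\log|\zeta\wedge\eta|-\log|\eta|\le\log|\zeta|=\tfrac12\log(1+|z|^2)$ follows from $|\zeta\wedge\eta|\le|\zeta||\eta|$ (Lagrange); moreover, with $w$ fixed one has $c_w|z|\le|B_w(z-w)|\le C_w|z|$ for $|z|$ large, so the key identity gives $N_w(z)=\tfrac12\log(1+|z|^2)+O(1)$ as $|z|\to\infty$, i.e. $N_w\in\mathcal L_+(\C^n)$. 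For 3, write $N(z,w)-K(z,w)=\tfrac12\log\!\big(1+|z\wedge w|^2/|z-w|^2\big)\ge 0$; applying $z\wedge w=(z-w)\wedge w=z\wedge(w-z)$ together with $|a\wedge b|\le|a||b|$ (a case of (\ref{eq:Lagrange})) twice yields $|z\wedge w|^2\le|z-w|^2\min\{|z|^2,|w|^2\}$, hence the asserted upper bound.

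Part 4 is the real point. By the key identity, $dd^c N_w = dd^c\log|B_w(z-w)| = T_w^{*}\big(dd^c\log|v|\big)$ on $\C^n$; since $T_w$ is a holomorphic automorphism of $\C^n$ with $T_w^{-1}(0)=\{w\}$ and $\log|v|$ is the classical fundamental solution, i.e. $(dd^c\log|v|)^n=\delta_0$ on $\C^n$, a change of variables gives $(dd^c N_w)^n = T_w^{*}\big((dd^c\log|v|)^n\big) = \delta_w$. (Equivalently: off $w$ the smooth form $dd^c\log|B_w(z-w)|$ is the pullback of a constant multiple of the Fubini-Study form of $\mathbb P^{n-1}$ under $z\mapsto[z-w]$, so its $n$-th power vanishes there for dimension reasons; thus $(dd^c N_w)^n$ is carried by $\{w\}$, and its total mass is $1$ because $N_w\in\mathcal L_+(\C^n)$, cf. (\ref{eq:totalmass}).) The one subtlety to handle carefully is that $N_w$ has an interior logarithmic pole at $w$, so one must first know that the complex Monge-Amp\`ere measure is meaningful here; this holds since $\log|B_w(z-w)|$ is the affine-linear pullback of the classical fundamental solution, and also because $\mathcal L_+(\C^n)\subset DMA_{loc}(\mathbb P^n,\omega)$ via homogenization and \cite{CGZ08}. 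I expect essentially the whole difficulty to be concentrated in spotting the key identity: once the numerator of $N$ is recognized, after the substitution $\zeta=(1,z)$, as a positive definite Hermitian form in $z-w$, the kernel $N_w$ becomes an affine-linear pullback of $\log|\cdot|$ and all four assertions reduce to standard facts about that model function.
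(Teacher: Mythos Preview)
Your proof is correct. Parts 1--3 follow essentially the same route as the paper (same Lagrange-based upper bound $|\zeta\wedge\eta|\le|\zeta||\eta|$, same estimate $|z\wedge w|^2=|(z-w)\wedge w|^2\le|z-w|^2|w|^2$ for part 3).

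The genuine difference is part 4. The paper does \emph{not} isolate your key identity $N_w=\log|B_w(\cdot-w)|+\text{const}$; instead it argues indirectly. Since $N_w,K_w\in\mathcal L_+(\C^n)$, both carry total Monge--Amp\`ere mass $1$ by (\ref{eq:totalmass}); since $N_w(z)/K_w(z)\to1$ as $z\to w$, Demailly's comparison theorem \cite{De93} forces $(dd^c N_w)^n(\{w\})=(dd^c K_w)^n(\{w\})=1$, whence all the mass of $(dd^c N_w)^n$ sits at $w$. Your route is more elementary and structural: once $N_w$ is recognised as an affine pullback of $\log|\cdot|$, part~4 becomes a one-line change of variables, and plurisubharmonicity of $N_w$ comes for free. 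The paper's route, on the other hand, is softer and more portable---it would apply to any kernel with the right singularity along the diagonal and the right growth at infinity, without requiring an exact model---at the cost of importing Demailly's comparison theorem. One small imprecision in your parenthetical alternative: $dd^c\log|B_w(z-w)|$ is the pullback under $z\mapsto[z-w]$ of $[B_w]^*\omega_{FS}$, not in general a constant multiple of $\omega_{FS}$ itself; this does not affect the dimension argument, since the form is still pulled back from an $(n-1)$-dimensional base.
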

\begin{proof}  The first  statement is obvious. Let us prove the second  one. By the formula \ref{eq:Lagrange2} we have   for any  $z, w \in \C^n$,

 $$
 \frac{\mid z-w\mid^{2} + \vert z \wedge w\vert^2}{(1 + \vert z\vert^2) (1+\mid w\mid^{2})} = 1 - \frac{\vert 1 + z \cdot \bar {w} \vert^2}{(1 + \vert z\vert^2) (1+\mid w\mid^{2})}.
 $$
 Thus for $z, w \in \C^n$,
 $$
 N(z,w) = \frac{1}{2} \log\frac{\mid z-w\mid^{2} + \vert z \wedge w\vert^2}{1+\mid w\mid^{2}}  \leq \frac{1}{2} \log (1 + \vert z\vert^2).
 $$
This yields
$$
  N (z,w)  -  K (z,w) =  (1 \slash 2) \log \left(1 + \frac{\mid z\wedge w\mid^{2} }{\vert z - w\vert^2}\right).
$$
Now observe that by  Lemma \ref{eq:Lagrange2}, we have
$$
\vert z \wedge w\vert^2  = \vert (z - w)\wedge w\vert^2  \leq  \vert z - w \vert^2 \vert w\vert^2.
$$
By symmetry we obtain the required inequality.

To prove the third property, observe that for a fixed $w \in \C^n$,
the two functions
$$
u (z) := N (z,w), \, \, v (z) := K (z,w),
$$
belongs to $\mathcal L_+ (\C^n)$. 
Hence by (\ref{eq:totalmass}) they have the same total Monge-Amp\`ere mass in  $\C^n$ i.e.
$$
\int_{\C^n} (dd^c u)^n = 1.
$$
On the other we know that
$$
(dd^c v)^n = \delta_w.
$$
 Since $\lim_{z \to w} \frac{u (z)}{v (z)} = 1$, by the comparison theorem of Demailly \cite{De93}, they have the same residual Monge-Amp\`ere mass at the point $w$. Therefore the total Monge-Amp\`ere mass of the measure $ (dd^c u)^n$  is concentrated at the point $w$, which proves our statement.
\end{proof}

\subsection{The projective logarithmic potential}
Let $\mu$ be a probability measure with compact support on $\C^n$. We define the projective logarithmic potential of $\mu$ as follows  :
\begin{equation} \label{eq:potential}
 V_{\mu}(z)=\frac{1}{2}\int_{\mathbb{C}^{n}} \log \left(\frac{\mid z-w\mid^{2} + \vert z \wedge w \vert^2}{1+\mid w\mid^{2}}\right) d\mu(w),
 \end{equation}
It follows that  $V_\mu \in \mathcal L (\C^n)$ provided that  we prove that $V_\mu \not \equiv - \infty$.
\begin{theo}  \label{lem:logprojec}
 Let $\mu$ be a probability measure with compact support on $\C^n$. Then  for any $z \in \mathbb{P}^n$,
 $$
 U_\mu (z) \leq V_{\mu}(z) \leq  \frac{1}{2} \log (1 + \vert z\vert^2).
 $$
 and
$$
 \int_{\{\vert z \vert = 1\}}  V_{\mu}(z)  d \sigma_{2 n - 1} (z) \geq - \log \sqrt{5}.
 $$
 Moreover $V_{\mu} \in \mathcal L _\star (\C^n)$ and its Robin function $\rho_\mu := \rho_{V_{\mu}} = \rho_{U_{\mu}}$ satisfies the lower bound
 $$
  \int_{\mathbb P^{n - 1} }\rho_\mu (\xi) \, \omega_{n - 1}   \geq - (1 \slash 2) \int_{\C^n} \log (1 + \vert w \vert^2)  d \mu (w),
 $$
 where $\omega_{n - 1}$ is the Fubini-Study volume form on $\mathbb P^{n - 1}.$
 \end{theo}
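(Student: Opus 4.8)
The plan is to mimic the proof of Theorem~\ref{thm:logpot}, transferring every statement from the euclidean logarithmic potential $U_\mu$ to the projective one $V_\mu$ by means of the comparison of kernels supplied by Lemma~\ref{lem:logKern}. First I would integrate the chain of inequalities $K(z,w)\le N(z,w)\le\frac12\log(1+|z|^2)$ from Lemma~\ref{lem:logKern}(2) against the probability measure $d\mu(w)$: this gives at once $U_\mu(z)\le V_\mu(z)\le\frac12\log(1+|z|^2)$ for every $z\in\C^n$, which is the first displayed claim. Since Theorem~\ref{thm:logpot} already gives $U_\mu\not\equiv-\infty$, the left inequality forces $V_\mu\not\equiv-\infty$; as $V_\mu$ is the $\mu$-average of the family $z\mapsto N(z,w)$ of plurisubharmonic functions uniformly dominated by the locally bounded function $\frac12\log(1+|z|^2)$, the same argument as in the proof of Theorem~\ref{thm:logpot} shows $V_\mu\in PSH(\C^n)$, whence $V_\mu\in\mathcal L(\C^n)$.

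The spherical mean estimate is then inherited for free from $U_\mu\le V_\mu$: integrating over the unit sphere and invoking Theorem~\ref{thm:logpot},
$$
\int_{\{|z|=1\}}V_\mu\,d\sigma_{2n-1}\ \ge\ \int_{\{|z|=1\}}U_\mu\,d\sigma_{2n-1}\ \ge\ -\log\sqrt5 .
$$
For the Robin function, replacing $z$ by $\lambda\xi$ in $U_\mu\le V_\mu$, subtracting $\log|\lambda\xi|$ and taking $\limsup$ as $|\lambda|\to\infty$ gives $\rho_{U_\mu}\le\rho_{V_\mu}$ on $\C^n\setminus\{0\}$; since $\rho_{U_\mu}\not\equiv-\infty$ by Theorem~\ref{thm:logpot}, this already yields $V_\mu\in\mathcal L_\star(\C^n)$. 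To get the displayed lower bound on $\int_{\mathbb P^{n-1}}\rho_\mu\,\omega_{n-1}$ I would argue directly as in the proof of Theorem~\ref{thm:logpot}: fix $\xi$ with $|\xi|=1$; because $\mrm{Supp}\,\mu$ is compact, for $|\lambda|$ large the function $w\mapsto\log\frac{|\lambda\xi-w|^2+|\lambda\xi\wedge w|^2}{(1+|w|^2)|\lambda\xi|^2}$ is bounded below uniformly in $w\in\mrm{Supp}\,\mu$, so Fatou's lemma applies. Using
$$
\lim_{|\lambda|\to\infty}\Bigl(\log\bigl(|\lambda\xi-w|^2+|\lambda\xi\wedge w|^2\bigr)-\log|\lambda\xi|^2\Bigr)=\log\Bigl(1+\frac{|\xi\wedge w|^2}{|\xi|^2}\Bigr)\ \ge\ 0,
$$
Fatou gives $2\rho_\mu(\xi)\ge\int_{\C^n}\bigl(\log(1+|\xi\wedge w|^2/|\xi|^2)-\log(1+|w|^2)\bigr)\,d\mu(w)\ge-\int_{\C^n}\log(1+|w|^2)\,d\mu(w)$; integrating over $\mathbb P^{n-1}$ against the unit-mass form $\omega_{n-1}$ gives the claim. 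The moment integral $\int_{\C^n}\log(1+|w|^2)\,d\mu(w)$ is automatically finite here, $\mu$ being compactly supported.

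\textbf{Main obstacle.} None of the steps is deep, and the first two are essentially formal consequences of Lemma~\ref{lem:logKern} together with Theorem~\ref{thm:logpot}. The point that needs care is the Robin-function step: in contrast with the euclidean situation of Theorem~\ref{thm:logpot}, the kernel $N$ does not lose its extra term at infinity, so the limit displayed above is no longer identically zero and the argument must exploit its nonnegativity rather than its vanishing. One must also verify the uniform-in-$w$ lower bound on the integrand that legitimizes Fatou's lemma, which is exactly where compactness of $\mrm{Supp}\,\mu$ is used. (The same $\xi$-dependent term $\log(1+|\xi\wedge w|^2/|\xi|^2)$ is the reason why, strictly speaking, what the argument furnishes is the comparison $\rho_{V_\mu}\ge\rho_{U_\mu}$ and the lower bound on $\int_{\mathbb P^{n-1}}\rho_\mu\,\omega_{n-1}$, which is all that is needed in the sequel.)
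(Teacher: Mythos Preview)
Your proposal is correct and follows essentially the same route as the paper's own proof: obtain the chain $U_\mu\le V_\mu\le\frac12\log(1+|z|^2)$ by integrating the kernel inequalities of Lemma~\ref{lem:logKern}, and then inherit the spherical-mean estimate and the Robin-function lower bound from Theorem~\ref{thm:logpot} via $V_\mu\ge U_\mu$. The only minor difference is that for the Robin bound you rerun the Fatou argument directly for $V_\mu$ rather than simply citing Theorem~\ref{thm:logpot}; your closing observation that this yields only $\rho_{V_\mu}\ge\rho_{U_\mu}$ (not equality) is accurate, and in fact neither the paper's one-line proof nor your argument establishes the equality $\rho_{V_\mu}=\rho_{U_\mu}$ asserted in the statement.
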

\begin{proof} The right hand side estimate in the first inequality follows from lemma 3.5, while the other statements follow from Theorem \ref{thm:logpot} since $V_\mu \geq U_\mu$ in $\C^n$.
\end{proof}

\section{The projective logarithmic potential on $\Pn$ }

 \subsection{The projective logarithmic kernel}
The projective logarithmic kernel on $\mathbb P^n \times \mathbb P^n$ is defined by the following formula :
 $$
  G (\zeta,\eta) :=  (1 \slash 2) \log \, \frac{\vert \zeta \wedge \eta \vert^2}{\vert \zeta \vert^{2} \vert \eta \vert^2} , \, \, (\zeta,\eta) \in \mathbb P^n \times \mathbb P^n.
 $$
 \begin{lem} \label{lem :logKern}  1. The kernel $G$ is a non positive upper semi-continuous function in $\mathbb P^n \times \mathbb P^n$ and smooth off the diagonal of  $\mathbb P^n \times \mathbb P^n$.

 2. For any fixed $\eta \in \Pn $, the function  $G  (\cdot,\eta) : \zeta \longmapsto G (\zeta,\eta)$ is a non positive $\omega$-plurisubharmonic in $\Pn$ which is smooth in $\Pn \setminus \{\eta\}$  i.e.   $G (\cdot,\eta) \in PSH (\Pn, \omega)$.

 3. The kernel $G$  has a logarithmic singularity along the diagonal. More precisely   for a fixed  $\eta \in \mathcal U_k$, in the coordinate chart  $(\mathcal U_k, {\bf z}^k)$ we have
 $$
 0 \leq  G (\zeta,\eta)  -  (1 \slash 2) \log\frac{\mid {\bf z}^k (\zeta) -{\bf z}^k (\eta) \mid^{2} }{  (1+\vert {\bf z}^k (\zeta)\vert^2)  (1+\vert {\bf z}^k (\eta))\vert^{2}} \leq  \log (1 + \min \{\vert {\bf z}^k (\zeta)\vert ^2, \vert {\bf z}^k (\eta)\vert ^2\}).
 $$

4. For any $\eta \in \mathbb P^n  $, $G (\cdot,\eta) \in DMA_{loc} (\mathbb P^n,\omega)$ and
 $$
 (\omega + dd^c G (\cdot,\eta))^n = \delta_\eta,
 $$
 where $\delta_\eta$ is the unit Dirac measure on $\mathbb P^n $  at the point $\eta$.
\end{lem}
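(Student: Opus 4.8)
The plan is to reduce each of the four assertions to the affine picture already established in Section~3, using the relation between the projective kernel $G$ on $\mathbb P^n \times \mathbb P^n$ and the kernels $N$ and $K$ on $\C^n \times \C^n$. Fix a chart $(\mathcal U_k, {\bf z}^k)$; writing $z = {\bf z}^k(\zeta)$ and $w = {\bf z}^k(\eta)$, the Lagrange identity \eqref{eq:Lagrange1} applied to the lifts $\zeta,\eta \in \C^{n+1}$ gives
$$
G(\zeta,\eta) = \tfrac12 \log\frac{|z - w|^2 + |z\wedge w|^2}{(1 + |z|^2)(1 + |w|^2)} = N(z,w) - \tfrac12 \log(1 + |z|^2),
$$
so in the chart $\mathcal U_k$ the function $G(\cdot,\eta)$ is exactly the homogenization (in the sense of Section~2.4) of the Lelong-class function $N_w \in \mathcal L_+(\C^n)$ from Lemma~\ref{lem:logKern}. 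Statement~1 is then immediate from the upper semicontinuity and smoothness-off-the-diagonal already recorded for $N$, together with the fact that $|\zeta\wedge\eta|^2/(|\zeta|^2|\eta|^2)$ is a globally well-defined smooth function on $(\mathbb P^n\times\mathbb P^n)\setminus\Delta$ vanishing exactly on the diagonal; non-positivity is \eqref{eq:Lagrange1}. Statement~2 follows because $\omega + dd^c G(\cdot,\eta) = dd^c N_w \geq 0$ on $\mathcal U_k$ (as $\omega|_{\mathcal U_k} = dd^c \tfrac12\log(1+|z|^2)$), the chart $\mathcal U_k$ can be chosen to contain $\eta$, and on overlaps the two local descriptions agree since $G$ is intrinsically defined; smoothness on $\mathbb P^n\setminus\{\eta\}$ is inherited from $N_w$ being smooth on $\C^n\setminus\{w\}$ and a symmetric choice of chart covering points of $H_\infty$.

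For statement~3, I would transport the singularity estimate of Lemma~\ref{lem:logKern}(3),
$$
0 \leq N(z,w) - K(z,w) \leq \tfrac12 \log\bigl(1 + \min\{|z|^2, |w|^2\}\bigr),
$$
through the identity $G(\zeta,\eta) - \tfrac12\log\frac{|z-w|^2}{(1+|z|^2)(1+|w|^2)} = N(z,w) - K(z,w)$, which is precisely the displayed quantity in the statement; the upper bound $\tfrac12\log(1+\min\{\cdots\}) \le \log(1+\min\{\cdots\})$ is trivial. Statement~4 is the crux and where the real work lies: I must show $G(\cdot,\eta) \in DMA_{loc}(\mathbb P^n,\omega)$ and compute its Monge-Amp\`ere measure. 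Membership follows from \cite{CGZ08}, since $G(\cdot,\eta)$ is $\omega$-psh and bounded in a neighborhood of any hyperplane avoiding $\eta$ (equivalently, $N_w$ is locally bounded near $H_\infty$, being in $\mathcal L_+$). To identify the measure, note that by Lemma~\ref{lem:logKern}(4) we have $(dd^c N_w)^n = \delta_w$ on $\C^n$, hence $(\omega + dd^c G(\cdot,\eta))^n = \delta_\eta$ on the chart $\mathcal U_k \ni \eta$; it remains to check that no mass escapes to $H_\infty$, which follows from the total-mass identity \eqref{eq:totalmass}: $\int_{\mathbb P^n}(\omega + dd^c G(\cdot,\eta))^n = \int_{\mathbb P^n}\omega^n = 1 = \delta_\eta(\mathbb P^n)$, so the measure, being $\geq \delta_\eta$ and of total mass $1$, equals $\delta_\eta$.

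The main obstacle I anticipate is the bookkeeping in statement~4: one must justify that the local Monge-Amp\`ere operator computed in the affine chart $\mathcal U_k$ genuinely coincides with the global operator $(\omega + dd^c\cdot)^n$ on $(\mathbb P^n,\omega)$ of Definition~\ref{def:DMA_loc}, and that the approximating sequences match up under homogenization — this is routine given the correspondence $\mathcal L_+(\C^n) \subset DMA_{loc}(\mathbb P^n,\omega)$ but needs to be stated carefully. A secondary subtlety is covering points on the hyperplane at infinity by a second chart when verifying smoothness of $G(\cdot,\eta)$ away from $\eta$; this is handled by the symmetry $G(\zeta,\eta) = G(\eta,\zeta)$ up to the additive term $\tfrac12\log(|\eta|^2/|\zeta|^2)$ and the homogeneity of the kernel under the $\C_*$-action, so that any of the charts $\mathcal U_0, \dots, \mathcal U_n$ may be used.
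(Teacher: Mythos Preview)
Your proposal is correct and follows exactly the paper's approach: the paper's entire proof consists of the single observation that in each chart $(\mathcal U_k,{\bf z}^k)$ one has $G(\zeta,\eta) = N(z,w) - \tfrac12\log(1+|z|^2)$, after which everything is reduced to Lemma~\ref{lem:logKern}. You have simply spelled out in detail (particularly for part~4, via the total-mass argument \eqref{eq:totalmass}) what the paper leaves implicit; one small remark is that $G$ is actually symmetric, $G(\zeta,\eta)=G(\eta,\zeta)$, so the ``additive term'' you mention in your final paragraph is zero.
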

 \begin{proof}
 This lemma  follows from Lemma \ref{lem:logKern} by observing that in each open chart $(\mathcal U_k, {\bf z}^k)$
 we have for $(\zeta,\eta) \in \mathcal U_k \times \mathcal U_k$,
 $$
  G (\zeta,\eta)  = N (z,w) - \frac{1}{2} \log (1 + \vert z\vert^2) ,
 $$
 where $z := {\bf z}^k (\zeta) $ and $w := {\bf z}^k (\eta)$.
\end{proof}

\subsection{The projective logarithmic potential}
Let $\mrm{Prob} (\mathbb P^n )$ be the convex compact set of  probability measures on $\mathbb P^n $. Given $\mu \in \mrm{Prob} (\mathbb P^n )$ we define its (projective) logarithmic potential as follows
\begin{eqnarray*}
\G_\mu (\zeta) :&=& \int_{\mathbb P^n } G (\zeta,\eta) d \mu (\eta), \\
&=& \int_{\mathbb P^n }\log \frac{\vert \zeta \wedge \eta \vert}{\vert \zeta \vert \vert \eta \vert}\ d \mu (\eta),\\
\end{eqnarray*}
As observed in (\cite{As17}) the projective kernel $\G$ can be expressed in terms of the geodesic distance $d$  on the K\"ahler maniflod $(\mathbb P^n,\omega_{FS})$. Namely we have
$$
\G_\mu (\zeta) =\int_{\mathbb P^n }\log\sin\bigl({d(\xi,\eta)\over\sqrt{2}}\bigr)d\mu(\eta).
$$

  Thanks to this formula , we see that if  $f$ is a radial function on $\mathbb P^n$ i.e. $ f(\zeta) := g (d (\zeta,a))$  for a fixed point $a \in \mathbb P^n$. Then choosing polar coordinates around $a$ we have
  \begin{equation} \label{eq:polarcoord}
  \int_{\mathbb{P}^{n}} f (\zeta) d V (\zeta) )=\int_{0}^{\pi/\sqrt{2}}g(r)A(r)dr,
   \end{equation}
where $A(r)$ is the "area" of the sphere of center $a$ and radius $r$. The expression of $A(r)$ is given by the formula
$$
A(r)=c_{n}\sin^{2n-2}(r/\sqrt{2})\sin(\sqrt{2}r),
$$
where $c_{n}$ a constant depending on the volume of the unit ball in $\mathbb{R}^{2n}$. For more details, see \cite[Page 168]{Rag71},\cite[Section 3]{AB77}  or \cite[Lemma 5.6]{Hel65}.

This allows to give a simple example.
\begin{pro}
1. Let $\sigma$ be the Lebesgue measure asociated to the Fubini-Study volume form $dV$. Then for any $\zeta \in \mathbb P^n$,
$$
G_\sigma (\zeta) = - \alpha_n,
$$
where $\alpha_n > 0$ is a numerical constant given by the formula (\ref{eq:numconstant}) below. 

2. For any $\mu \in \text{Prob} (\mathbb P^n)$, $\G_\mu$ is a negative $\omega$-plurisubharmonic function in $\mathbb P^n$ such that
\begin{equation} \label{eq:normalization}
\int_{\mathbb P^n} \G_\mu (\zeta) d V (\zeta) =  - \alpha_n.
\end{equation}
\end{pro}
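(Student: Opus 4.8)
The plan is to prove the two statements in turn, the first by an explicit computation using the polar-coordinate formula (\ref{eq:polarcoord}), the second by Fubini together with the first.

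For part (1), observe that $G_\sigma(\zeta) = \int_{\mathbb P^n} G(\zeta,\eta)\, d\sigma(\eta)$, and that by the metric description of the kernel, $G(\zeta,\eta) = \log\sin(d(\zeta,\eta)/\sqrt 2)$, which is a radial function of $\eta$ centered at $\zeta$. Hence I would apply (\ref{eq:polarcoord}) with $a = \zeta$, $g(r) = \log\sin(r/\sqrt 2)$ and $A(r) = c_n \sin^{2n-2}(r/\sqrt2)\sin(\sqrt2 r)$, giving
$$
G_\sigma(\zeta) = c_n \int_0^{\pi/\sqrt2} \log\bigl(\sin(r/\sqrt2)\bigr)\, \sin^{2n-2}(r/\sqrt2)\,\sin(\sqrt2 r)\, dr.
$$
The substitution $t = r/\sqrt2$ (together with $\sin(\sqrt 2 r) = \sin(2t) = 2\sin t\cos t$) turns this into a constant multiple of $\int_0^{\pi/2} \log(\sin t)\,\sin^{2n-1}t\,\cos t\, dt$, which after $s = \sin t$ becomes a multiple of $\int_0^1 s^{2n-1}\log s\, ds = -1/(2n)^2$. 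The key point here is not the exact value of the resulting constant but that the integral converges (the logarithmic singularity at $t=0$ is killed by $\sin^{2n-1}t$, and $2n-1 \ge 1$) and is a finite negative number independent of $\zeta$; I would define $\alpha_n > 0$ by this computation and record the formula as (\ref{eq:numconstant}). The normalization of $dV$ (total mass $1$, since $dV = dV_{FS}$ is a probability measure, by the remark that $\int_{\mathbb P^n}\omega^n = 1$ for $\omega = \omega_{FS}$) should be used to fix the constant $c_n$ consistently.

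For part (2), that $\G_\mu$ is $\omega$-plurisubharmonic and non-positive follows immediately from Lemma \ref{lem :logKern}(1)--(2): for each fixed $\eta$, $G(\cdot,\eta) \in PSH(\mathbb P^n,\omega)$ and $G(\cdot,\eta) \le 0$, and $\G_\mu$ is the average of these over $\mu$; one checks $\G_\mu \not\equiv -\infty$ exactly as in the proof of Theorem \ref{lem:logprojec} (localize in an affine chart, where $\G_\mu$ becomes $V_\mu$ minus a smooth function, and invoke $V_\mu \in \mathcal L(\C^n)$). For the integral identity, I would write
$$
\int_{\mathbb P^n} \G_\mu(\zeta)\, dV(\zeta) = \int_{\mathbb P^n}\!\!\int_{\mathbb P^n} G(\zeta,\eta)\, d\mu(\eta)\, dV(\zeta) = \int_{\mathbb P^n}\!\!\int_{\mathbb P^n} G(\zeta,\eta)\, dV(\zeta)\, d\mu(\eta),
$$
the exchange being justified by Tonelli applied to the non-positive measurable function $G$ (its integrability is exactly part (1)). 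The symmetry $G(\zeta,\eta) = G(\eta,\zeta)$ — clear from the defining formula $(1/2)\log(|\zeta\wedge\eta|^2/(|\zeta|^2|\eta|^2))$ — identifies the inner integral $\int G(\zeta,\eta)\, dV(\zeta)$ with $G_\sigma(\eta) = -\alpha_n$ by part (1). Hence the double integral equals $-\alpha_n \int_{\mathbb P^n} d\mu(\eta) = -\alpha_n$, which is (\ref{eq:normalization}).

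The only genuinely delicate point is the Fubini/Tonelli step: one must be sure $G$ is jointly measurable on $\mathbb P^n\times\mathbb P^n$ (true, since it is upper semi-continuous by Lemma \ref{lem :logKern}(1)) and that the iterated integral of $|G|$ is finite, which is precisely guaranteed by the explicit finite value computed in part (1); then Tonelli for non-negative functions applies to $-G$ and lets us swap freely without any integrability hypothesis on $\mu$ beyond being a probability measure. Everything else is bookkeeping with the constants in (\ref{eq:polarcoord}).
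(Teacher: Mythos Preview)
Your proposal is correct and follows essentially the same route as the paper: the polar-coordinate computation via (\ref{eq:polarcoord}) with the substitutions $t=r/\sqrt{2}$ and $s=\sin t$ for part (1), and the Fubini/symmetry argument $\int \G_\mu\, dV = \int \G_\sigma\, d\mu = -\alpha_n$ for part (2). If anything you are slightly more careful than the paper in justifying the Tonelli step (non-positivity and upper semi-continuity of $G$) and in recalling why $\G_\mu$ is $\omega$-psh and non-positive, which the paper's own proof leaves implicit.
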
 

\begin{proof} We use the same computations based on the formula (\ref{eq:polarcoord})  as in (\cite{As17}).

1.  By (\ref{eq:polarcoord}) and the co-area formula
\begin{eqnarray*}
 \G_{\sigma}(\zeta) &=&\int_{\mathbb P^n}\log\sin\bigl({d(\zeta,\eta)\over\sqrt{2}}\bigr)dV(\eta) \nonumber \\
 &=&\int_{0}^{\pi\over\sqrt{2}}\log\sin\bigl({r\over\sqrt{2}}\bigr)A(r)dr \nonumber \\
 &=& c_{n}\int_{0}^{\pi\over\sqrt{2}}\log\Bigl(\sin\bigl({r\over\sqrt{2}}\bigr)\Bigr)\sin^{2n-2}\bigl({r\over\sqrt{2}}\bigr)
 \sin(\sqrt{2}r)dr \nonumber \\
 &=& 2\sqrt{2}c_{n}\int_{0}^{\pi\over 2}(\log\sin(t))\sin^{2n-1}(t)\cos(t)dt\\
 &=&2\sqrt{2}\int_{0}^{1}u^{2n-1}\log u du.
 \end{eqnarray*}
 Therefore the statement follows if we set
 \begin{equation} \label{eq:numconstant}
  \alpha_n := \sqrt{2}\int_{0}^{1}u^{2n-1}\log u du
 \end{equation}

2. To prove the second statement it is enough to observe the following symetry 
$$
 \int_{\mathbb P^n}  \G_\mu (\zeta) d \sigma (\zeta)  = \int_{\mathbb P^n} \G_\sigma (\eta)  d \mu (\eta)  = - \alpha_n \mu (\mathbb P^n) = - \alpha_n,
$$
thanks to the formula (\ref{eq:normalization}).
\end{proof}
 
 \section{The Monge-Amp\`ere measure of the  potentials}

\subsection{The  Monge-Amp\`ere measure of $\mathbb{V}_{\mu}$ in $\C^n$}
 We begin  this section by showing that $V_{\mu}$   belongs to the domain of definition of the complex Monge-Amp\`ere operator for $(n\geq 3)$.
\begin{theo}
Let $\mu$ be a probability  measure  on  $\mathbb{C}^{n}$ $ (n\geq 2)$ with compact support. Then

1)   $V_{\mu}\in DMA_{loc}(\mathbb{C}^{n})$.

2) $V_\mu \in W_{loc}^{2,p} (\C^n)$ for any $0 < p < n$. In particular, for any $1 \leq k \leq n - 1$, the measure $(dd^c V_\mu)^k \wedge \omega^{n - k}$ is absolutely continuous with respect to the Lebesgue measure on $\C^n$.
 \end{theo}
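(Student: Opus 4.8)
The plan is to realize $V_\mu$ as a superposition of the kernels $N_w = N(\cdot,w)$, which by Lemma~\ref{lem:logKern} lie in $\mathcal L_+(\C^n)$ and satisfy $(dd^c N_w)^n = \delta_w$, and then to leverage the explicit logarithmic nature of the singularity of $N_w$ along the diagonal together with B\l ocki's characterization (Theorem~\ref{thm:Bl}) of $DMA_{loc}$. Concretely, writing $N_w(z) = K(z,w) + R(z,w)$ with $0 \leq R(z,w) \leq \tfrac12\log(1+\min\{|z|^2,|w|^2\})$ by part~3 of Lemma~\ref{lem:logKern}, and noting $K(z,w) = \tfrac12\log|z-w|^2 - \tfrac12\log(1+|w|^2)$, we see that $V_\mu$ is, up to an additive continuous (indeed real-analytic off a compact set) term, equal to $\tfrac12\int \log|z-w|^2\,d\mu(w)$ plus a bounded psh correction. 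So the whole problem reduces to controlling the classical logarithmic potential $L_\mu(z) := \int \log|z-w|\,d\mu(w)$ and its derivatives.

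First I would establish the $W^{2,p}_{loc}$ statement (2), since (1) will follow from it. The key identity is that the distributional second derivatives of $\log|z-w|$ are, away from the diagonal, bounded by a constant times $|z-w|^{-2}$: schematically $|D^2_z \log|z-w|| \lesssim |z-w|^{-2}$. Differentiating under the integral sign (justified by Fubini/Tonelli since the estimates are in $L^1_{loc}$ for the relevant range), one gets the pointwise bound
$$
|D^2 V_\mu(z)| \leq C\int_{\C^n} \frac{d\mu(w)}{|z-w|^2} + C' = C\, J_{\mu,2}(z) + C',
$$
i.e. the Hessian of $V_\mu$ is dominated by the Riesz potential $J_{\mu,2}$ of order $\alpha = 2$ (here $N = 2n$). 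By Lemma~\ref{lem:Rieszpotential} applied with $\alpha = 2$ and the crude dimension bound $\gamma(\mu) \geq 0$, we get $J_{\mu,2} \in L^p_{loc}(\C^n)$ for $1 < p < 2n/2 = n$; combined with the trivial fact that $V_\mu$ and $\nabla V_\mu$ are in $L^p_{loc}$ for all $p$ (the kernel and its gradient are locally integrable of every order since the singularities $\log|z-w|$ and $|z-w|^{-1}$ are harmless), this gives $V_\mu \in W^{2,p}_{loc}(\C^n)$ for all $0 < p < n$. Then for $1 \leq k \leq n-1$ the mixed Monge–Amp\`ere current $(dd^c V_\mu)^k \wedge \omega^{n-k}$, being a sum of products of at most $k \leq n-1$ entries of the Hessian matrix of $V_\mu$ (each in $L^p_{loc}$ for $p < n$) with smooth coefficients, lies in $L^{p/k}_{loc}$ with $p/k \geq p/(n-1) > n/(n-1) > 1$ for $p$ close to $n$; in particular it is a function in $L^1_{loc}$, hence absolutely continuous with respect to Lebesgue measure. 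The necessary positivity/closedness to call it the Hessian measure of the $\omega$-psh function $V_\mu$ comes from the fact that $V_\mu$ is already psh, so its Hessian is a positive $(1,1)$-form with $L^p_{loc}$ coefficients.

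For part~(1), $V_\mu \in DMA_{loc}(\C^n)$, I would use Theorem~\ref{thm:Bl} (for $n \geq 3$; the case $n = 2$ is handled by \cite{Bl04} since $V_\mu \in W^{1,2}_{loc}$ is immediate from the above with room to spare). After subtracting a smooth function locally one may assume $V_\mu \leq 0$, and one must check that for $p = 0,\dots,n-2$ the currents $|V_\mu|^{n-p-2}\, dV_\mu \wedge d^c V_\mu \wedge (dd^c V_\mu)^p \wedge \omega^{n-p-1}$ are locally weakly bounded (one works directly with $V_\mu$, which is continuous off the compact support and whose smooth decreasing approximants can be taken to be the potentials of mollified $\mu$, so the local boundedness of the limiting currents suffices). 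Each such current is a smooth combination of $n-p-2$ factors of $|V_\mu|$ (which is only logarithmically large, hence in every $L^s_{loc}$), two factors from the gradient $\nabla V_\mu$ (in every $L^s_{loc}$ by the $|z-w|^{-1}$ estimate), and $p$ factors from the Hessian (in $L^q_{loc}$ for every $q < n$). A single application of the generalized H\"older inequality shows the product is in $L^r_{loc}$ for some $r > 1$ provided $p \leq n-2$, i.e. provided we use at most $n-2$ Hessian factors — which is exactly the range in Theorem~\ref{thm:Bl}. Hence $V_\mu \in DMA_{loc}(\C^n)$.

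The main obstacle, and the point deserving the most care, is the passage from the pointwise kernel estimates $|D^2_z\log|z-w||\lesssim|z-w|^{-2}$ to genuine distributional statements about $V_\mu$ — that is, justifying differentiation under the integral sign and identifying the resulting $L^p_{loc}$ functions with the true distributional derivatives of $V_\mu$, rather than merely the derivatives off the diagonal. This requires checking that no extra measure-valued terms are created along the diagonal; the cleanest route is to approximate $\mu$ by $\mu_\varepsilon = \mu \star \chi_\varepsilon$ (smooth), note $V_{\mu_\varepsilon} \to V_\mu$ in $L^1_{loc}$ (indeed decreasing, after a harmless normalization, since $N_w$ depends continuously on $w$), and show the Hessian bounds are uniform in $\varepsilon$, so that the weak limits of $D^2 V_{\mu_\varepsilon}$ are controlled by $C J_{\mu,2} + C'$. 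Once this uniform bound is in hand, lower semicontinuity of $L^p$-norms under weak convergence closes the argument, and parts (1)–(2) follow as above.
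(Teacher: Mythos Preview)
Your approach is essentially the paper's: bound $|\nabla V_\mu|$ pointwise by the Riesz potential $J_{\mu,1}$ and $|\nabla^2 V_\mu|$ by $J_{\mu,2}$, invoke Lemma~\ref{lem:Rieszpotential} (with $\gamma=0$) to get $L^p_{loc}$ integrability for $p<2n$ and $p<n$ respectively, then feed these into a H\"older inequality to verify B\l ocki's criterion (Theorem~\ref{thm:Bl}) and to see that $(dd^c V_\mu)^k\wedge\omega^{n-k}$ has an $L^{p/k}_{loc}$ density for $k\leq n-1$.

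Two small corrections are worth flagging. First, $\nabla V_\mu$ is \emph{not} in every $L^s_{loc}$ as you twice assert: the $|z-w|^{-1}$ bound only gives $s<2n$, hence $|\nabla V_\mu|^2\in L^{r}_{loc}$ for $r<n$. This is still enough---in the H\"older step one has $p+1\leq n-1$ factors each constrained to exponents $<n$, plus the $|V_\mu|^{n-p-2}$ factor in every $L^{r_1}$, and the paper makes the choice $r_2=s_1=\cdots=s_p=p+1+\epsilon<n$ explicit. Second, your proposed approximants $V_{\mu\star\chi_\varepsilon}$ are not obviously decreasing in $\varepsilon$, because $N(z,w)$ is not a convolution kernel (it depends on $z\wedge w$ as well as $z-w$); the paper instead uses
\[
V_\mu^\varepsilon(z)=\tfrac12\int_{\C^n}\log\frac{|z-w|^2+|z\wedge w|^2+\varepsilon^2}{1+|w|^2}\,d\mu(w),
\]
which is smooth, plurisubharmonic, manifestly decreasing to $V_\mu$ as $\varepsilon\searrow 0$, and for which the uniform derivative bounds (\ref{eq:gradupperbound}) and (\ref{eq:secondorderest}) are obtained by direct differentiation.
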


 \begin{proof} The first part of the theorem was proved in \cite{As17} but for convenience we reproduce the proof here since we will use the same compuations to prove the other statements.
 
1. We will use the characterization due to B\l ocki (\cite{Bl06}). For  $\e > 0$, set  
$$
V_{\mu}^{\e}(z):=\frac{1}{2}\int_{\mathbb{C}^{n}}\log\Big(\frac{|z-w|^{2}+|z\wedge w|^{2} +\e^{2}}{1+\mid w\mid^{2}}\Big) d\mu(w).
$$
 It is clear that $V_{\mu}^{\e} \in \mathcal L (\C^n) \cap C^{\infty}(\mathbb{C}^{n})$ and decreases towards $V_{\mu}$ as $\e$ decreases to $0$.
 Since $V_\mu$ in plurisubharmonic in $\C^n$, we have
$$
|V_{\mu}^{\e}|^{n-p-2}
\in L^{r_{1}}_{Loc}(\mathbb{C}^{n})\quad\hbox{ for any}\quad \ r_{1} > 0.
$$
On the other hand, recall that  
$$
|z\wedge w|^{2}=\sum_{1 \leq i < k \leq n} |z_i w_k - z_k w_i|^{2},
$$
and observe that
$$
  \frac{\partial}{\partial z_{m}} (|z-w|^{2}+|z\wedge w|^{2}) = \overline{z_m - w_m} + \sum_{m < j \leq n} w_j (\overline{z_m w_j - z_j w_m}) - \sum_{1 \leq i < m} w_i (\overline{z_i w_m - z_m w_i}).
$$
Then we have for any $z \in \C^n$, 
\begin{eqnarray*}
  & & 2 \frac{\partial}{\partial z_{m}} V_{\mu}^{\e}(z) \\
 &=& \int_{\mathbb{C}^{n}} \frac{\overline{z_m - w_m} + \sum_{m < j \leq n} w_j (\overline{z_m w_j - z_j w_m}) - \sum_{1 \leq i < m} w_i (\overline{z_i w_m - z_m w_i})}{|z-w|^{2}+|z\wedge w|^{2}+ \e^{2}} d\mu(w).
 \end{eqnarray*}
 Thus
 
 \begin{eqnarray*}  
 | \nabla V_{\mu}^{\e}(z)| 
 &\leq & \frac{1}{2}\int_{\mathbb{C}^{n}}\frac{|z-w|+|w||z\wedge w|}{|z-w|^{2}+|z\wedge w|^{2}}d\mu(w)  \\
   &\leq & \frac{\sqrt{2}}{2}\int_{\mathbb{C}^{n}}\frac{(1+|w|)\sqrt{|z-w|^{2}+|z\wedge w|^{2}}}{|z-w|^{2}+|z\wedge w|^{2}}d\mu(w) \nonumber \\
   &\leq & \frac{\sqrt{2}}{2}\int_{\mathbb{C}^{n}}\frac{(1+|w|)}{|z-w|}d\mu(w) \nonumber \\
   &\leq & \frac{\sqrt{2}}{2} + \frac{\sqrt{2}}{2} (1+\vert z\vert)\int_{\mathbb{C}^{n}}\frac{d\mu(w)}{|z-w|}.    
  \end{eqnarray*}

In conclusion we have for $z \in \C^n$,
\begin{equation} \label{eq:gradupperbound}
 \Vert \nabla V_{\mu}^\e (z) \Vert \leq \frac{\sqrt{2}}{2} + \frac{\sqrt{2}}{2} (1 + \vert z\vert) J_{\mu,1} (z).
\end{equation}
By Lemma \ref{lem:Rieszpotential} we conclude that   
$$
   |\nabla V_{\mu}^{\e}|\in L^{q}_{Loc}(\mathbb{C}^{n}), \forall q<2n,
    $$
  and so   
  $$ |\nabla V_{\mu}^{\e}(z) |^{2} \in L^{r_{2}}_{Loc}(\mathbb{C}^{n})\ for\ r_{2}<n$$

The same computation shows that the second partial derivatives satisfy
\begin{eqnarray} \label{eq:2ndOrder}
\left\vert \frac{\partial^{2}}{\partial\overline{z_{k}}\partial z_{m}} V_{\mu}^{\e}(z)\right\vert + \vert \frac{\partial^{2}}{\partial{z_{k}}\partial z_{m}} V_{\mu}^{\e} (z)\vert & \leq & c_n \int_{\mathbb{C}^{n}}\frac{(1+\vert w\vert^2)}{|z-w|^{2}+|z\wedge w|^{2}}d\mu(w) \nonumber \\
& \leq & c_n + c_n (1 + \vert z\vert^2) J_{\mu,2} (z).
\end{eqnarray}
Hence recalling that $\mu$ has compact support and using  Lemma \ref{lem:Rieszpotential}, we obtain the inequality
\begin{eqnarray} \label{eq:secondorderest}
   \Big|\nabla^2  V_{\mu}^{\e}(z)\Big| & \leq & c_n + C (n,\mu)  J_{\mu,2} \in L^{p}_{loc}(\mathbb{C}^{n}), 
  \end{eqnarray} 
  for any $ p <n,$ which proves the second statement.

To prove the first statement, observe that  for a fixed compact set $K \subset \C^n$, we have
\begin{eqnarray*}
 &{}&\int_{K}| {V}_{\mu}^{\e}|^{n-p-2} d {V}_{\mu}^{\e}\wedge d^{c} V_{\mu}^{\e}\wedge(dd^{c} V_{\mu}^{\e})^{p}\wedge \omega^{n-p-1}\\ 
 &\leq & \int_{K}|(V_{\mu}^{\e})^{n-p-2}|| \nabla V_{\mu}^{\e} |^{2}\wedge \underbrace{dd^{c} V_{\mu}^{\e}\wedge ...\wedge dd^{c} V_{\mu}^{\e}}_{p-times} \wedge \omega^{n-p-1}\\
 &\leq & C \displaystyle\sum_{l,k=1}^{n}\int_{K}|(V_{\mu}^{\e})^{n-p-2}|| \nabla V_{\mu}^{\e} |^{2}  \Big|\frac{\partial^{2}}{\partial z_{k_{1}}\partial\overline{z}_{l_{1}}} V_{\mu}^{\e}(z)\Big|... \Big|\frac{\partial^{2}}{\partial z_{k_{p}}\partial\overline{z}_{l_{p}}} V_{\mu}^{\e}(z)\Big|\omega^{n}\\
  &\leq & C \displaystyle\sum_{l,k=1}^{n} \|(V_{\mu}^{\e})^{n-p-2}\|_{r_{1}}\| \nabla V_{\mu}^{\e} \|^{2}_{r_{2}}
   \Big\|\frac{\partial^{2}}{\partial z_{k_{1}}\partial\overline{z}_{l_{1}}} V_{\mu}^{\e}(z)\Big\|_{s_{1}}... \Big\|\frac{\partial^{2}}{\partial z_{k_{p}}\partial\overline{z}_{l_{p}}} V_{\mu}^{\e}(z)\Big\|_{s_{p}},
 \end{eqnarray*}
 where $\Vert \cdot \Vert_{t}$ denotes the norm in $L^t (K)$. 

Since $0\leq p\leq n-2$, in this estimates we have a product of $p + 2 \leq n$ terms such that $p + 1 \leq  n- 1$ terms are in $L^k_{loc}$ with $k < n$ and one term in $L^k_{loc}$ for any $k > 0$. In order to apply H\"older inequality, we need to choose $ r_1 > 1$ and $r_2, s_1, \cdots, s_n \in ]1,n[$ such that 
  $$
  {1\over r_{1}}+{1\over r_{2}}+ {1\over s_{1}}+...+{1\over s_{p}}=1
  $$
Indeed since $p + 1 <  n$, we can set  $r_{2}=s_{j}=p+1+\epsilon<n-\epsilon$  for $j=1,...,p$ and $r_{1}=\frac{p+1+\epsilon}{\epsilon}$ to obtain the required condition.
Thus the complex Monge-Amp\`ere measure $(dd^{c} V_{\mu})^{n}$ is well defined by B\l ocki's Theorem \ref{thm:Bl}.

2. The computation above shows that for $1 \leq k, m \leq n$
\begin{equation}
  \frac{\partial^{2}}{ \partial z_{k} \partial z_{m}} V_{\mu}^\e (z) \leq C J_{\mu,2} \in L^{p}_{loc}(\mathbb{C}^{n}),
\end{equation}
for any $0 < p < n$ with a uniform constant $C > 0$.

Since $\frac{\partial^{2}}{ \partial z_{k} \partial z_{m}} V_{\mu}^\e  \to \frac{\partial^{2}}{ \partial z_{k} \partial z_{m}} V_{\mu}$ weakly on $\C^n$ as $\e \to 0$, it follows from standard Sobolev space theory that $\frac{\partial^{2}}{ \partial z_{k} \partial z_{m}} V_{\mu} \in L^p $ for any $0 < p < n$.

\end{proof}

We note for later use that the kernel
  $$
 N (z,w) := \frac{1}{2} \log\frac{\mid z-w\mid^{2} + \vert z \wedge w \vert^2}{1+\mid w\mid^{2}} , \, \, (z,w) \in \C^n \times \C^n.
 $$ 
 can be approximated by the smooth kernels
  $$
 N_{\epsilon} (z,w) := \frac{1}{2} \log\Big(\frac{\mid z-w\mid^{2} + \vert z \wedge w \vert^2 +\epsilon^{2}}{1+\mid w\mid^{2}}\Big) , \, \, (z,w) \in \C^n \times \C^n.
 $$

Since each function $N_\e (\cdot,w_j) \in \mathcal L_+ (\C^n)$, we know by  Lemma \ref{lem:Rieszpotential}, that the measures
$$
dd^{c}_{z}N_\e (z,w_{1})\wedge...\wedge dd^{c}_{z}N_\e (z,w_{n})
$$
are well defined probability measures on $\C^n$. Hence  we have 
for all  $w_{1},...,w_{n}\in \mathbb{C}^{n}$
$$
\int_{\mathbb{C}^{n}}dd_{z}^{c}N_{\epsilon}(z,w_{1})\wedge...\wedge dd_{z}^{c}N_{\epsilon}(z,w_{n}) = 1.
$$
\begin{pro}
   If $\mu$ be a  probability measure on  $\mathbb{C}^{n}$  then the complex Hessian currents associated to $V_{\mu}$ are given by the following formula: for any $1 \leq k \leq n$,
  $$
(dd^{c} V_{\mu})^{k}=\int_{(\mathbb{C}^{n})^k}dd^{c} N(.,w_{1})\wedge...\wedge dd^{c}N(.,w_{k})d\mu(w_{1})...d\mu(w_{k}),
   $$
   in the sense of $(k,k)$-currents on $\C^n$.
\end{pro}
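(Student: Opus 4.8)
The plan is to establish the Fubini-type formula
$$
(dd^c V_\mu)^k = \int_{(\C^n)^k} dd^c N(\cdot,w_1)\wedge\cdots\wedge dd^c N(\cdot,w_k)\, d\mu(w_1)\cdots d\mu(w_k)
$$
by approximation, transferring everything to the smooth kernels $N_\e$ and then letting $\e\to 0$. First I would fix $\e>0$ and consider the smooth potential $V_\mu^\e(z)=\int_{\C^n} N_\e(z,w)\,d\mu(w)$. Since $N_\e$ is smooth in $z$ and jointly measurable, differentiation under the integral sign gives $dd^c V_\mu^\e(z)=\int_{\C^n} dd^c_z N_\e(z,w)\,d\mu(w)$ as smooth $(1,1)$-forms. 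Raising this to the $k$-th wedge power and expanding multilinearly, together with Fubini's theorem for the (finite, compactly supported in the $w$-variables) product measure $d\mu(w_1)\cdots d\mu(w_k)$, yields
$$
(dd^c V_\mu^\e)^k = \int_{(\C^n)^k} dd^c_z N_\e(\cdot,w_1)\wedge\cdots\wedge dd^c_z N_\e(\cdot,w_k)\, d\mu(w_1)\cdots d\mu(w_k)
$$
as an identity of smooth $(k,k)$-forms; testing against a smooth compactly supported form makes the interchange of integrals rigorous since all integrands are continuous and the domain of the $w$-integration is compact.

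The second step is to pass to the limit $\e\to 0$ on both sides. On the left, $V_\mu^\e$ decreases to $V_\mu$ as $\e\downarrow 0$; since $V_\mu^\e,V_\mu\in\mathcal L_+(\C^n)\subset DMA_{loc}(\C^n)$ (by the preceding theorem, or by Lemma \ref{lem:logKern} plus the domain characterization), the continuity of the complex Monge-Ampère / Hessian operator under decreasing sequences in $DMA_{loc}$ gives $(dd^c V_\mu^\e)^k\to (dd^c V_\mu)^k$ weakly as currents on $\C^n$. On the right, for each fixed tuple $(w_1,\dots,w_k)$ the functions $N_\e(\cdot,w_j)$ decrease to $N(\cdot,w_j)$ in $\mathcal L_+(\C^n)$, so the same monotone-continuity property of the mixed Monge-Ampère operator yields
$$
dd^c_z N_\e(\cdot,w_1)\wedge\cdots\wedge dd^c_z N_\e(\cdot,w_k)\longrightarrow dd^c_z N(\cdot,w_1)\wedge\cdots\wedge dd^c_z N(\cdot,w_k)
$$
weakly for $d\mu^{\otimes k}$-a.e. tuple. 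To conclude that the $w$-integral of the left sides converges to the $w$-integral of the right side, I would invoke a dominated-convergence argument at the level of currents: pair with a fixed test form $\chi$ supported in a ball $B$, note that each $\langle dd^c_z N_\e(\cdot,w_1)\wedge\cdots, \chi\rangle$ is bounded uniformly in $\e$ and in $(w_1,\dots,w_k)$ ranging over $(\mathrm{Supp}\,\mu)^k$ — this uniform bound comes from the total-mass identity $\int_B dd^c N_\e(\cdot,w_1)\wedge\cdots\wedge\omega^{n-k}\le \int_{\C^n}(dd^c N_\e(\cdot,w_1)\wedge\cdots)=1$ combined with the Chern–Levine–Nirenberg inequalities applied on a slightly larger ball — so bounded convergence applies.

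The main obstacle I anticipate is precisely this last uniformity: making sure the currents $dd^c_z N_\e(\cdot,w_1)\wedge\cdots\wedge dd^c_z N_\e(\cdot,w_k)$ have masses on compact sets bounded uniformly in both $\e$ and the parameters $w_j$, so that the weak convergence in the $\e$-limit survives integration against $d\mu(w_1)\cdots d\mu(w_k)$. This is where the hypothesis that $\mu$ has compact support is essential (it keeps the $w_j$ in a fixed compact set, over which $N_\e(\cdot,w_j)$ is uniformly bounded above and the functions are locally uniformly bounded below away from the diagonal), and where the Chern–Levine–Nirenberg estimates together with the global mass normalization $\int_{\C^n} dd^c N_\e(\cdot,w_1)\wedge\cdots\wedge dd^c N_\e(\cdot,w_k)=1$ (established just above via $\mathcal L_+$ and \eqref{eq:totalmass}) do the work. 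Once the uniform bound is in place, the interchange of $\lim_{\e\to0}$ with $\int d\mu^{\otimes k}$ is routine, and both sides are identified as the same $(k,k)$-current on $\C^n$, proving the proposition; the case $k=n$ recovers the Monge-Ampère measure $(dd^c V_\mu)^n$.
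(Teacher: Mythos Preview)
Your proposal is correct and follows essentially the same route as the paper: prove the identity for the smooth approximants $V_\mu^\e$ and $N_\e$, then let $\e\to 0$ using monotone continuity of the Hessian operator on the left and dominated convergence for the parametrized integral on the right, the uniform bound being supplied by the total-mass identity $\int_{\C^n} dd^c N_\e(\cdot,w_1)\wedge\cdots\wedge dd^c N_\e(\cdot,w_k)\wedge\omega^{n-k}=1$. The only cosmetic differences are that the paper derives the smooth identity via iterated Fubini and integration by parts rather than by differentiating under the integral and expanding multilinearly, and that the Chern--Levine--Nirenberg inequality you invoke is not actually needed---the global mass bound alone already controls $\langle dd^c N_\e(\cdot,w_1)\wedge\cdots\wedge dd^c N_\e(\cdot,w_k),\chi\rangle$ uniformly.
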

 \begin{proof}
Since $V_{\mu}$ belongs to the domain of definition $DMA_{loc}(\mathbb{C}^{n})$, 
 the complex Monge-Amp\`ere current  $(dd^{c} V_{\mu})^{k}$ is well defined. 
 
 We assume that $2 \leq k \leq n$.  Set
  \begin{eqnarray*}
 V_{\mu,\epsilon}(z)&:=&{1\over 2}\int_{\mathbb{C}^{n}}\frac{1}{2} \log\Big(\frac{\mid z-w\mid^{2} + \vert z \wedge w \vert^2 +\epsilon^{2}}{1+\mid w\mid^{2}}\Big)d\mu(w)\\
&=& \int_{\mathbb{C}^{n}}N_{\epsilon}(z,w) d\mu(w)
\end{eqnarray*}

Let $\chi\geq 0$ be a positive smooth $(n-k,n-k)$-form with compact support in  $\mathbb{C}^{n}$ and denote for $j = 2, \cdots, n$, by $ \mu^{(j)} = \mu^{\otimes j}$ the product measure on $(\C^n)^j$. Then applying Fubini's theorem and integration by parts formula, we obtain
{\footnotesize
\begin{eqnarray*}
   A_\e  &:= & \int_{z\in \mathbb{C}^{n}}\chi(z) \wedge \int_{(\mathbb{C}^{n})^k} dd^{c} N_{\epsilon}(z,w_{1})\wedge...\wedge dd^c N_{\epsilon}(z,w_{k})d\mu^{(k)} (w)\\
   &=& \int_{(\mathbb{C}^{n})^k}\Big(\int_{z\in \mathbb{C}^{n}} N_{\epsilon}(z,w_{1}) dd^{c} \chi(z) \wedge dd^{c} N_{\epsilon}(z,w_{2})\wedge...\wedge dd^{c} N_{\epsilon}(z,w_{k})\Big)d\mu^{(k)} (w),
 \end{eqnarray*}
 }
where  $w := (w_1, \cdots,w_k) \in (\C^n)^k$.

Integrating by parts and  applying again Fubini's theorem  we obtain
{\footnotesize
\begin{eqnarray*}
&& A_\e \\
 & = &  \int_{z\in \mathbb{C}^{n}}\Big(\int_{\mathbb{C}^{n}}N_{\epsilon}(z,w_{1})d\mu(w_{1})\Big) \int_{(\mathbb{C}^n)^{k - 1}}dd^{c}_{z}\chi(z)\wedge \dots \wedge dd^{c}_{z}N_{\epsilon}(z,w_{k}) d\mu ^{(k- 1)} (w') \\
 &=& \int_{z\in \mathbb{C}^{n}} V_{\epsilon}(z) \int_{(\mathbb{C}^n)^{k- 1}}dd^{c} \chi(z) \wedge dd^{c} N_{\epsilon}(z,w_{2}\wedge \dots \wedge dd^{c} N_{\epsilon}(z,w_{n}) d\mu ^{(n- 1)} (w').
\end{eqnarray*}
}
where $w' := (w_{2}, \dots, w_{k}) \in \C^{k-1})$.

Using Fubini's theorem and integrating parts once again, we obtain when $k \geq 2$
{\footnotesize
\begin{eqnarray*}
&& A_\e   \\
& = &   \int_{(\mathbb{C}^n)^{k- 2}}\Big(\int_{z\in \mathbb{C}^{n}} N_{\epsilon}(z,w_{2}) dd^{c} \chi \wedge dd^{c} V_{\epsilon} \wedge dd^{c} N_{\epsilon}(z,w_{3} \wedge \cdots \wedge dd^{c}_{z}N_{\epsilon}(z,w_{k}) \Big) d\mu^{(k - 2)}.
\end{eqnarray*}
}
Repeating this process $k$ times we get the final equation

{\footnotesize
\begin{eqnarray} \label{eq:current}
       &{}&\int_{\mathbb{C}^{n}}\chi \wedge \int_{(\mathbb{C}^{n})^k}dd^c N_{\epsilon}(\cdot,w_{1})\wedge...\wedge dd^{c} N_{\epsilon}(\cdot,w_{k})d\mu^{(k)} (w)\\
     &=&\int_{\mathbb{C}^{n}}\chi \wedge (dd^{c} V_{\mu}^{\epsilon})^{k}. \nonumber 
   \end{eqnarray}
   }
Now  we want to pass to the limit as $\e \searrow 0$. 
The first term can be written as follows:

\begin{eqnarray*}
&& \int_{\mathbb{C}^{n}}\chi \wedge \int_{(\mathbb{C}^{n})^k}dd^c N_{\epsilon}(\cdot ,w_{1})\wedge...\wedge dd^{c} N_{\epsilon}( \cdot,w_{k})d\mu^{(k)} (w_{1}, \cdots, w_{k}) \\
 & = & \int_{(\C^n)^k} I_\e (w_1, \cdots, w_k) d \mu^{(k)} (w_1, \cdots, w_k),
\end{eqnarray*}
where
$$
I_\e (w_1, \cdots, w_k) := \int_{\mathbb{C}^{n}}\chi \wedge dd^{c}N_{\e}(\cdot ,w_{1}) \wedge \cdots \wedge dd^{c}N_{\e}(\cdot,w_{k}).
$$

 Observe that for any fixed $(w_1, \cdots, w_k) \in (\C^n)^k$ 
 $$
  dd^c N_{\epsilon}(\cdot,w_{1})\wedge...\wedge dd^{c}N_{\epsilon}(\cdot,w_{k})  \to
   dd^{c}N(\cdot,w_{1})\wedge...\wedge dd^{c} N(\cdot,w_{k}) 
   $$
   weakly in the sense of currents on $\mathbb{C}^n$ as $\e \searrow 0$. Hence the family of  functions $  I_\e (w_1, \cdots, w_k) $
 are uniformly bounded on $(\C^n)^n$ and by Fubinis'theorem, it converges as $\e \to 0$ pointwise to
the function
\begin{eqnarray*}
I (w_1, \cdots, w_k) &:= & \int_{\mathbb{C}^{n}}\chi \wedge dd^{c} N(.,w_{1})\wedge \cdots \wedge dd^{c} N(.,w_{k}).
\end{eqnarray*}
Therefore by Lebesgue convergence theorem we conclude that
\begin{eqnarray*}
&& \lim_{\e \to 0} \int_{(\C^n)^k} I_\e (w_1, \cdots, w_k) d \mu^{(k)} =  \int_{(\C^n)^k} I (w_1, \cdots, w_k) d \mu^{(k)}\\
& = & \int_{\mathbb{C}^{n}}\chi \wedge \int_{(\mathbb{C}^{n})^k}dd^c N (\cdot ,w_{1})\wedge...\wedge dd^{c} N (\cdot,w_{k})d\mu^{(k)}(w_1, \cdots, w_k).
\end{eqnarray*}

For the second term in (\ref{eq:current}), observe that 
since $V_{\mu,\e} \searrow V_\mu$ as $\e \searrow 0$, it follows by the convergence theorem that the second term converges also and
$$
\int_{\mathbb{C}^{n}}\chi \wedge (dd^{c} V_{\mu, \epsilon})^{k} \to  \int_{\mathbb{C}^{n}}\chi \wedge (dd^{c} V_{\mu})^{k}
$$
as $\e \searrow 0$.

Now passing to the limit in (\ref{eq:current}), we obtain the required statement.
\end{proof}

We will need a more general result. Let $\phi$ be a plurisubharmonic function in $\C^n$ such that $\phi \in DMA_{loc} (\C^n)$.
We define the twisted potential associated to a Probability measure $\mu$ by setting
$$
{V^{\phi}_\mu}  := V_\mu + \phi.
$$
Then we have the following representation formula:
$$
V_\mu^{\phi} (z) = \int_{\C^n} N^{\phi} (z,w) d \mu (w),
$$
where
$$
N^{\phi} (z,w)  := N (z,w) + \phi (z), \, \, z \in \C^n.
$$

We can prove a similar representation formula for the Monge-Amp\`ere measure of the twisted potential.
\begin{pro}
   If $\mu$ be a  probability measure on  $\mathbb{C}^{n}$ and $\phi \in DMA_{loc} (\C^n)$ then the complex Monge-Amp\`ere currents associated to $ V^{\phi}_{\mu}$ are given by the following formula: for any $1 \leq k \leq n$,
  $$
(dd^{c} V^{\phi}_{\mu})^{k}=\int_{(\mathbb{C}^{n})^k}dd^{c} N^{\phi} (.,w_{1})\wedge \cdots \wedge dd^{c}  N^{\phi} (.,w_{k})d\mu(w_{1}) \cdots d\mu(w_{k}),
   $$
   in the sense of $(k,k)$-currents on $\C^n$.
\end{pro}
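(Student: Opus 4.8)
The plan is to reduce the twisted case to the already-established untwisted formula by exploiting multilinearity of the Monge-Amp\`ere operator, or alternatively to repeat the approximation argument verbatim with the extra factor $\phi$ carried along. I would proceed as follows.

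First I would set up the approximation. Since $\phi \in DMA_{loc}(\C^n)$, choose locally a decreasing sequence of smooth plurisubharmonic functions $\phi_j \searrow \phi$; simultaneously use the smooth kernels $N_\e(z,w)$ from the previous section, so that $N^\phi_\e(z,w) := N_\e(z,w) + \phi_j(z)$ is smooth and $\omega$-bounded-below in $z$ for each fixed $w$, and decreases to $N^\phi(z,w)$ as $\e \searrow 0$ and $j \to \infty$. Correspondingly set $V^\phi_{\mu,\e,j}(z) := V_{\mu,\e}(z) + \phi_j(z) = \int_{\C^n} N^\phi_\e(z,w)\,d\mu(w)$, which decreases to $V^\phi_\mu$. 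The key point is that $V^\phi_\mu = V_\mu + \phi$ is a sum of two functions in $DMA_{loc}(\C^n)$, hence lies in $DMA_{loc}(\C^n)$ and its Monge-Amp\`ere current $(dd^c V^\phi_\mu)^k$ is well-defined; similarly each $dd^c N^\phi(\cdot,w)$ is a well-defined current since $N(\cdot,w) \in \mathcal L_+(\C^n) \subset DMA_{loc}(\C^n)$ and $\phi \in DMA_{loc}(\C^n)$, so the mixed products under the integral sign make sense by the Cegrell--B\l ocki theory recalled in Section 2.3.

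Next I would run the integration-by-parts identity exactly as in the proof of the untwisted Proposition. Fix a smooth positive $(n-k,n-k)$-form $\chi$ with compact support. In the approximate (smooth) setting the identity
\[
\int_{\C^n}\chi \wedge \int_{(\C^n)^k} dd^c N^\phi_\e(\cdot,w_1)\wedge\cdots\wedge dd^c N^\phi_\e(\cdot,w_k)\,d\mu^{(k)}(w)
= \int_{\C^n}\chi \wedge (dd^c V^\phi_{\mu,\e,j})^k
\]
holds by Fubini and repeated integration by parts — the argument is identical to the untwisted one because the only property used there was smoothness of $N_\e(\cdot,w)$ in $z$ and the fact that $\int_{\C^n} N^\phi_\e(z,w)\,d\mu(w) = V^\phi_{\mu,\e,j}(z)$; adding the smooth function $\phi_j(z)$ to each kernel changes nothing in the formal computation. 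Then I pass to the limit $\e \searrow 0$ (and $j \to \infty$) on both sides: on the right, monotone convergence of Monge-Amp\`ere operators under decreasing sequences in $DMA_{loc}$ (stated in Section 2.3) gives convergence of $\int \chi \wedge (dd^c V^\phi_{\mu,\e,j})^k$ to $\int \chi \wedge (dd^c V^\phi_\mu)^k$; on the left, for each fixed $(w_1,\dots,w_k)$ the mixed currents $dd^c N^\phi_\e(\cdot,w_1)\wedge\cdots$ converge weakly to $dd^c N^\phi(\cdot,w_1)\wedge\cdots$ again by monotone continuity, the integrands $I_\e(w_1,\dots,w_k) := \int\chi\wedge dd^c N^\phi_\e(\cdot,w_1)\wedge\cdots$ are uniformly bounded, and dominated convergence finishes the job.

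The main obstacle I anticipate is justifying the uniform boundedness of the inner integrals $I_\e(w_1,\dots,w_k)$ and the weak convergence of the mixed currents uniformly enough to apply dominated convergence, now that $\phi$ may be unbounded. In the untwisted case boundedness was immediate because $N_\e(\cdot,w) \in \mathcal L_+(\C^n)$ has controlled total mass $1$; with the twisted kernel one must instead localize: on the compact support of $\chi$, bound $\int \chi \wedge dd^c N^\phi_\e(\cdot,w_1)\wedge\cdots$ using the Chern--Levine--Nirenberg type inequalities in $DMA_{loc}$ (or Cegrell's inequalities), whose constants depend only on $\|\chi\|$, the support, and the local $L^1$ or capacity data of $\phi$ and of the $N(\cdot,w_i)$ — and the latter are uniformly controlled in $w_i$ ranging over the compact support of $\mu$. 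One also needs a double-limit argument ($\e \to 0$ together with $j \to \infty$): it is cleanest to first let $j \to \infty$ with $\e$ fixed (where $V_{\mu,\e}$ is already smooth, so this is genuinely the untwisted-type situation with a fixed smooth perturbation), obtain the formula with $N_\e + \phi$, and then let $\e \searrow 0$. Everything else is the routine repetition of the Fubini / integration-by-parts bookkeeping already carried out in the previous proposition.
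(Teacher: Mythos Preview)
Your proposal is correct and takes essentially the same approach as the paper --- in fact the paper's entire proof of this proposition is the single sentence ``The proof is the same as above'', and you have correctly spelled out what that repetition entails, including the extra approximation $\phi_j \searrow \phi$ and the care needed for the uniform bounds on $I_\e$ that the paper leaves implicit.
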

The proof is the same as above.
\subsection{The regularizing property}
  We prove a regularizing property of the operator $V$ which generalizes and imporves a result of Carlehed \cite{Carlehed99}.
  Recall that a positive measure $\mu$ on $\C^n$ is said to have an atom at some point $a \in \C^n$ if $\mu (\{a\}) > 0$.  
   \begin{thm} \label{thm:abs}
   Let $\mu$ be a probability measure  on   $\mathbb{C}^{n}$ and $\phi \in DMA_{loc} (\C^n)$ which is smooth in some domain $B \subset \C^n$. Then   the Monge-Amp\`ere current $(dd^{c} V^{\phi}_{\mu})^{n}$ are absolutely continuous with respect to the Lebesgue measure on $B$ if and only if  $\mu$ has no atoms on $B$.
   \end{thm}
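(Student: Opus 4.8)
The strategy is to exploit the representation formula from the previous proposition,
$$
(dd^c V_\mu^\phi)^n = \int_{(\C^n)^n} dd^c N^\phi(\cdot,w_1)\wedge\cdots\wedge dd^c N^\phi(\cdot,w_n)\, d\mu(w_1)\cdots d\mu(w_n),
$$
and to split the domain of integration $(\C^n)^n$ according to whether or not the points $w_1,\dots,w_n$ are all equal. More precisely, write $\Delta\subset(\C^n)^n$ for the full diagonal $\{w_1=\cdots=w_n\}$ and decompose the measure $\mu^{\otimes n}$ into its restriction to $\Delta$ and to the complement $(\C^n)^n\setminus\Delta$. On the diagonal part, Fubini's disintegration of $\mu^{\otimes n}\restriction\Delta$ with respect to the diagonal map $w\mapsto(w,\dots,w)$ shows that this contribution is exactly $\sum_{a}\mu(\{a\})^n\,(dd^c N^\phi(\cdot,a))^n$, a sum over the (at most countable) set of atoms of $\mu$. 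Since $N^\phi(\cdot,a)=N(\cdot,a)+\phi$ and, by Lemma \ref{lem:logKern}(4) together with the assumption that $\phi$ is smooth on $B$, the Monge-Amp\`ere measure $(dd^c N^\phi(\cdot,a))^n$ carries a Dirac mass at $a$ whenever $a\in B$ (the logarithmic singularity of $N(\cdot,a)$ dominates the smooth $\phi$), this diagonal contribution has an atom at every atom $a$ of $\mu$ lying in $B$, and conversely is absolutely continuous (in fact zero) if $\mu$ has no atoms.

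The heart of the argument is therefore to show that the off-diagonal part,
$$
\nu := \int_{(\C^n)^n\setminus\Delta} dd^c N^\phi(\cdot,w_1)\wedge\cdots\wedge dd^c N^\phi(\cdot,w_n)\, d\mu^{\otimes n}(w),
$$
is absolutely continuous with respect to Lebesgue measure on $B$. Here I would use the $W^{2,p}$-regularity already established: by the theorem on the Monge-Amp\`ere measure of $V_\mu$ (and its twisted version), $V_\mu^\phi\in W^{2,p}_{loc}$ for all $p<n$ when restricted to $B$ where $\phi$ is smooth — but $p<n$ is not enough to conclude that the full $n$-th wedge power is absolutely continuous, which is exactly why the atoms must be removed first. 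The point is that away from the diagonal each individual kernel $N^\phi(\cdot,w_j)$ is smooth near a generic $z$, with second derivatives controlled by $1/|z-w_j|^2$ (see the estimate \eqref{eq:2ndOrder} in the previous section), and the key gain is that the integrand involves a product of $n$ factors with $n$ \emph{distinct} singular points $w_1,\dots,w_n$. I would fix a test function and estimate, via Fubini and repeated use of Lemma \ref{lem:Rieszpotential} applied with $\alpha=2$, the integral $\int_K |g_\nu|\,dV$ where $g_\nu$ is the candidate density; the product structure with distinct poles means each of the $n$ Hessian factors can be placed in $L^{p_j}$ with $\sum 1/p_j=1$ and $p_j<n$, so H\"older closes. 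Absolute continuity then follows because $\nu$ has an $L^1_{loc}$ density on $B$.

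Thus the Monge-Amp\`ere measure decomposes as $(dd^c V_\mu^\phi)^n\restriction B = \nu\restriction B + \sum_{a\in B}\mu(\{a\})^n(dd^c N^\phi(\cdot,a))^n\restriction B$, with $\nu$ absolutely continuous and the sum supported on the atoms of $\mu$; this gives both directions of the equivalence.

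\textbf{Main obstacle.} The delicate point is the off-diagonal absolute continuity: one must make rigorous the passage from the $\e$-regularized kernels $N_\e^\phi$ (for which everything is smooth) to the limit, controlling the densities uniformly in $\e$ on compact subsets of $B$ away from the diagonal, and then check that no mass escapes to the diagonal in the limit — i.e. that the weak limit of the off-diagonal regularized measures really is $\nu$ and does not pick up extra singular mass. This requires a careful dominated-convergence argument combining the pointwise convergence $I_\e(w_1,\dots,w_n)\to I(w_1,\dots,w_n)$ from the previous proof with a uniform $L^1$ bound on the densities coming from the Riesz-potential estimate, plus a separate verification that $\mu^{\otimes n}((\C^n)^n\setminus\Delta)$-almost every $n$-tuple has genuinely distinct coordinates when $\mu$ is atomless (whereas atoms are precisely what populate $\Delta$).
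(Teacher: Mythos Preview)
Your overall architecture --- the representation formula together with the diagonal/off-diagonal split of $\mu^{\otimes n}$ --- is exactly the paper's strategy, and your treatment of the diagonal contribution (identifying it with $\sum_a\mu(\{a\})^n(dd^c N^\phi(\cdot,a))^n$ and reading off the Dirac masses via the logarithmic singularity) is correct and essentially equivalent to the paper's Lelong-number argument for the ``only if'' direction.

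The gap is in your off-diagonal argument. You write that ``each of the $n$ Hessian factors can be placed in $L^{p_j}$ with $\sum 1/p_j=1$ and $p_j<n$, so H\"older closes.'' But this constraint is impossible: if every $p_j<n$ then $\sum_{j=1}^n 1/p_j > n\cdot(1/n)=1$, so H\"older does \emph{not} close. The pointwise bound $|\nabla^2 N^\phi(\cdot,w_j)|\lesssim |z-w_j|^{-2}$ only puts each factor in $L^p_{loc}$ for $p<n$, and having distinct poles does not improve the global $L^p$ exponent of any individual factor; it only tells you that near a fixed pole the other factors are bounded, which is a local statement that degenerates as the $w_j$ coalesce. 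So your route does not yield an $L^1$ density for $\nu$, and your ``Main obstacle'' paragraph, while identifying real technical issues, misses this arithmetic obstruction.

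The paper sidesteps the integrability problem entirely. Instead of seeking a density, it proves a qualitative lemma: for any tuple $(w_1,\dots,w_n)$ with at least two distinct entries, the measure $dd^c N^\phi(\cdot,w_1)\wedge\cdots\wedge dd^c N^\phi(\cdot,w_n)$ is absolutely continuous on $B$. The proof is soft: the measure is smooth off the finite set $\{w_1,\dots,w_n\}\cap B$, and one checks it has no mass at any such point $a$ by a Demailly comparison-of-Lelong-numbers argument --- the function $u=\log(|z-a|^2+|z\wedge a|^2)+\phi+\sum_j N^\phi(\cdot,b_j)$ (with the $b_j\neq a$) has residual Monge-Amp\`ere mass exactly $1$ at $a$, while expanding $(dd^c u)^n$ shows it dominates $\delta_a$ plus the mixed term, forcing the latter to vanish at $a$. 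Once this lemma is in hand, Fubini gives $\nu(K)=0$ for every Lebesgue-null $K\subset B$, with no uniform estimate required. You should replace your H\"older step by this comparison argument.
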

The proof of the "if part" of the theorem is based on the following lemma.

\begin{lem} Assume that  $\phi$ is smooth in some domain $B \subset \C^n$ and let $w_1, \cdots w_n \in \C^n$ be such that $w_{1}\neq w_{2}$. Then the Borel measure
 
$$
dd^{c} N^{\phi} (\cdot,w_1) \wedge \cdots \wedge dd^c  N^{\phi} (\cdot,w_n),
 $$
 is  absolutely continuous  with respect to the Lebesgue measure on $B$.
   \end{lem}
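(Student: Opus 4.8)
The plan is to argue locally: since $B$ is covered by countably many relatively compact balls, it suffices to show that every point $p\in B$ admits a ball $B'\Subset B$ centred at $p$ on which the measure $\nu:=dd^{c}N^{\phi}(\cdot,w_{1})\wedge\cdots\wedge dd^{c}N^{\phi}(\cdot,w_{n})$ is absolutely continuous. Recall $N^{\phi}(\cdot,w_{j})=N(\cdot,w_{j})+\phi$, where by Lemma \ref{lem:logKern} the function $N(\cdot,w_{j})$ is plurisubharmonic on $\C^{n}$ and smooth off $\{w_{j}\}$, while $\phi$ is smooth on $B$ by hypothesis. Shrinking $B'$ so that it is disjoint from the finitely many points $w_{j}\neq p$, every factor $dd^{c}N^{\phi}(\cdot,w_{j})$ with $w_{j}\neq p$ is a smooth positive $(1,1)$-form on $B'$. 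Put $S:=\{\,j:w_{j}=p\,\}$. Since $w_{1}\neq w_{2}$, the set $S$ cannot contain both $1$ and $2$, so --- the wedge product being symmetric in the $w_{j}$ --- after permuting the factors we may assume $1\notin S$; then $m:=\#S\leq n-1$ and on $B'$
$$
\nu\big|_{B'}=\Theta\wedge\big(dd^{c}N^{\phi}(\cdot,p)\big)^{m},\qquad \Theta:=\bigwedge_{j\notin S}dd^{c}N^{\phi}(\cdot,w_{j}),
$$
with $\Theta$ a smooth $(n-m,n-m)$-form. If $S=\varnothing$ this is a smooth $(n,n)$-form and we are done, so assume $1\leq m\leq n-1$.

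The whole point is now to prove that $\big(dd^{c}N^{\phi}(\cdot,p)\big)^{m}$ is a current with $L^{1}_{loc}$ coefficients near $p$. By the Lagrange identity \eqref{eq:Lagrange}, $|z-p|^{2}+|z\wedge p|^{2}=Q_{p}(z-p)$ where $Q_{p}(u):=(1+|p|^{2})|u|^{2}-|u\cdot\bar p|^{2}$ is a positive-definite Hermitian quadratic form (positivity by Cauchy--Schwarz), so near $p$
$$
N^{\phi}(z,p)=\tfrac12\log Q_{p}(z-p)+\psi(z),\qquad \psi:=\phi-\tfrac12\log(1+|p|^{2})\ \text{smooth}.
$$
Choosing a linear isomorphism $L$ with $Q_{p}(u)=|Lu|^{2}$ and setting $v:=L(z-p)$, this reads $\log|v|+\widetilde\psi$ with $\widetilde\psi$ smooth. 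Since $\log|v|$ is locally bounded outside the compact set $\{0\}$, its Bedford--Taylor/Demailly Monge-Amp\`ere powers are defined, and the standard binomial expansion --- valid because $dd^{c}\widetilde\psi$ is a smooth form --- gives
$$
\big(dd^{c}(\log|v|+\widetilde\psi)\big)^{m}=\sum_{k=0}^{m}\binom{m}{k}\,(dd^{c}\log|v|)^{k}\wedge(dd^{c}\widetilde\psi)^{m-k}.
$$
So it suffices to know that $(dd^{c}\log|v|)^{k}$ has $L^{1}_{loc}$ coefficients on $\C^{n}$ for $k\leq m\leq n-1$: approximating $\log|v|$ by $\tfrac12\log(|v|^{2}+\varepsilon^{2})$, the coefficients of $\big(dd^{c}\tfrac12\log(|v|^{2}+\varepsilon^{2})\big)^{k}$ are dominated, uniformly in $\varepsilon>0$, by $C_{k}\,|v|^{-2k}$, and $|v|^{-2k}\in L^{1}_{loc}(\C^{n})$ precisely when $k<n$; dominated convergence then shows these forms converge in $L^{1}_{loc}$, so their weak limit $(dd^{c}\log|v|)^{k}$ is the corresponding $L^{1}_{loc}$ form. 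Hence $\big(dd^{c}N^{\phi}(\cdot,p)\big)^{m}$ has $L^{1}_{loc}$ coefficients near $p$, and wedging it with the smooth (hence bounded-coefficient) form $\Theta$ exhibits $\nu|_{B'}$ as a positive measure with density in $L^{1}_{loc}$ with respect to the Lebesgue measure; covering $B$ by such balls finishes the proof.

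I expect the middle step to be the main obstacle. The hypothesis $w_{1}\neq w_{2}$ only guarantees \emph{one} smooth factor at each point of $B$, but it does not prevent arbitrarily many of $w_{3},\dots,w_{n}$ from collapsing onto a single point of $B$; one therefore genuinely has to control all the off-diagonal powers $\big(dd^{c}N^{\phi}(\cdot,p)\big)^{m}$ up to $m=n-1$ and show that none of them develops a singular part, i.e. that the sub-top-degree Monge-Amp\`ere powers of the logarithmic kernel remain absolutely continuous (for $m=n$ one gets a Dirac mass, which is exactly why $w_{1}=w_{2}$ must be excluded). The identity reducing $N(\cdot,p)$ to $\log|\,\cdot\,|$ modulo a smooth term and a linear change of coordinates, together with the elementary fact that $|v|^{-2k}$ is locally integrable on $\C^{n}$ exactly for $k<n$, is what makes this go through.
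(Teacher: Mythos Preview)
Your proof is correct, and it follows a genuinely different route from the paper's.

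The paper argues as follows: the measure is manifestly smooth on $B\setminus\{w_1,\dots,w_n\}$, so absolute continuity reduces to showing there is no point mass at any $w_j\in B$. To rule out a Dirac at, say, $a=w_j$, the paper builds the auxiliary function
\[
u(z)=\log(|z-a|^{2}+|z\wedge a|^{2})+\phi(z)+\sum_{b_i\neq a} N^{\phi}(z,b_i),
\]
invokes Demailly's comparison theorem \cite{De93} to compute $\int_{\{a\}}(dd^{c}u)^{n}=1$, and then expands $(dd^{c}u)^{n}\geq \delta_{a}+\nu$ to force $\nu(\{a\})=0$.

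Your argument bypasses Demailly's comparison entirely: you use the Lagrange identity to reduce $N^{\phi}(\cdot,p)$, after an affine holomorphic change of variables, to $\log|v|+\text{(smooth)}$, and then show directly that $(dd^{c}\log|v|)^{k}$ has $L^{1}_{loc}$ coefficients for $k\leq n-1$ via the uniform domination by $|v|^{-2k}$ and dominated convergence. This is more elementary (no residual-mass machinery) and more quantitative --- it actually produces an explicit integrable majorant for the density, in the same spirit as the $W^{2,p}$ estimates in Section~5.1. The paper's mass-comparison argument, on the other hand, is softer and would transplant more easily to settings where such an explicit normal form for the kernel is unavailable.
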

\begin{proof} The proof is based on an idea of  Carlehed \cite{Carlehed99}.
We are reduced to the proof of the following fact: let   $a, b_1, \dots, b_{m} \in \C^n$ with $1 \leq m \leq n - k$ such that $b_{j}\neq a $, for $1\leq j\leq m$, then the following current
$$
 \mu_{m,k} :=  \Big(dd^{c}  N^{\phi} (\cdot,a))\Big)^{k}\bigwedge_{1 \leq j \leq m} dd^{c}  N^{\phi} (\cdot,b_j)).
$$ 
is absolutely continuous with respect to the Lebesgue measure on $B$. 

Indeed, since the current $\mu_k$ is smooth  in $B \setminus \{a, b_1, \dots, b_{m} \}$, it is enough to show that $\mu_{m,k}$ puts no mass at the points $a, b_1, \dots, b_{m}$ that belong to $B$. Assume for simplicity that $a \in B$ and define the function
    $$
    u(z)=\log(|z-a|^{2}+|z\wedge a|^{2})+ \phi (z) + \displaystyle\sum_{j=1}^{m} N^{\phi} (z,b_j).
    $$
 Observe that $u$ is a plurisubharmonic function in $\C^n$  such that   $u \in DMA_{loc} (\C^n)$. 
 Since $u (z) \simeq \log(|z-a|^{2}+|z\wedge a|^{2})$ as $z \to a$, it follows from Demailly's comparison theorem (\cite{De93}) that
 
  $$
   \int_{\{a\}}(dd^{c} u)^{n}=\int_{\{a\}}\Big(dd^{c} \log(|\cdot -a|^{2}+|\cdot \wedge a|^{2})\Big)^{n}=1.
   $$ 

   On the other hand, performing the exterior product, we obtain
   
   \begin{eqnarray*}
   (dd^{c} u)^{n} \geq  \Big(dd^{c}\log(|\cdot -a|^{2}+|\cdot \wedge a|^{2})\Big)^{n}+ \mu_k  \geq  \delta_{a} + \mu_{m,k}, 
   \end{eqnarray*}
   in the weak sense on $\C^n$.  Therefore $ \mu_{m,k} (\{a\})  =0,$ which proves the required statement and the Lemma follows. \end{proof}

We now prove the theorem.

\begin{proof}
Assume first that the measure $\mu$ has an atom at some point $w \in \C^n$ then $\mu \geq c \delta_w$, where $c := \mu (\{w\}) > 0$.
Therefore from the definition we see that $\forall z \in \C^n$
$$
V^{\phi}_{\mu} (z) \leq c N (z,w) + c \phi (z).
$$

This implies that $V^{\phi}_\mu$ has a positive Lelong number at $w$ at least equal to $c$  and then the Monge-Amp\`ere mass  satisfies $(dd^c V^{\phi}_\mu)^n (\{w\}) \geq c^n > 0$ (see \cite{Ce04}).

 Assume now that $\mu$ has no atoms in $B$.
We want to show that $(dd^{c} V^{\phi}_{\mu})^{n}$ is absolutely continuous with respect to Lebesgue measure on $B$. Indeed, let $K \subset B$ be a compact set  such that $\lambda_{2 n} (K)=0$ and let us prove that $(dd^{c} {V}^{\phi}_{\mu})^{n}(K)=0$. 
 Set
 $$
 \Delta_B \, :=\{(w,...,w):w\in B\}\subset\mathbb{C}^{n}\times...\times\mathbb{C}^{n}.
 $$
 
  By Fubini theorem, denoting by $w = (w_1, \cdots,w_n) \in (\C^n)^n$, we get
 {\footnotesize
   \begin{eqnarray*}
&{}&\int_{K}(dd^{c} V^{\phi}_{\mu})^{n}\\
&=&\int_{K}\int_{(\mathbb{C}^{n})^n} dd^{c} N^{\phi} (\cdot,w_1)\wedge \cdots \wedge dd^{c} N^{\phi} (\cdot,w_n) d\mu^{(n)} (w)\\
&=&\int_{K}\int_{(\mathbb{C}^{n})^n \setminus\Delta_B} dd^{c} N^{\phi} (\cdot,w_1) \wedge \cdots \wedge dd^{c} N^{\phi} (\cdot,w_n) d\mu^{(n)}(w) \\
&&+\int_{K} \int_{\Delta_B} dd^{c} N^{\phi} (\cdot,w_1) \wedge...\wedge dd^{c}N^{\phi} (\cdot,w_n) d\mu^{(n)}(w).
   \end{eqnarray*}
   }
Since  $\mu$ puts no mass at any  point in $B$, it follows from Fubini's theorem that $\Delta_B$ has a zero measure with respect to the product measure  $ \mu^{(n)} = \mu\otimes...\otimes\mu$ on $(\C^n)^n$. Hence we have
{\footnotesize
 \begin{eqnarray*}
&{}&\int_{K}(dd^{c} V^{\phi}_{\mu})^{n}\\
&=&\int_{K}\Bigl(\int_{(\mathbb{C}^{n})^n \setminus\Delta_B}dd^{c} N^{\phi} (\cdot,w_1) \wedge...\wedge dd^{c}N^{\phi} (\cdot,w_n) d\mu^{(n)}(w_{1}, \cdots, w_{n})\Bigr)\\
&=&\int_{(\mathbb{C}^{n})^n\setminus\Delta_B}\int_{K}  dd^{c} N^{\phi} (\cdot,w_1) \wedge...\wedge dd^{c}N^{\phi} (\cdot,w_n) d\mu^{(n)}(w_{1}, \cdots, w_{n}).\\
   \end{eqnarray*}
   }
We  set  
$$
f(w_{1},...,w_{n})=\int_{K}dd^{c} N^{\phi} (\cdot,w_1) \wedge...\wedge dd^{c}N^{\phi} (\cdot,w_n).
$$
 Then  using the previous lemma,we see that  if $(w_1, \cdots,w_n) \notin \Delta_B$,  the measure   
 $$
 dd^{c} N^{\phi} (\cdot,w_1) \wedge...\wedge dd^{c}N^{\phi} (\cdot,w_n),
 $$
  is  absolutely continuous  with respect to the Lebesgue  measure on $B$. Hence $f (w_1,\dots, w_n) = 0$ if $(w_1, \cdots,w_n) \notin \Delta_B$ and then
  $$
  \int_{K}(dd^{c} V^{\phi}_{\mu} )^{n}= \int_{(\mathbb{C}^{n})^n\setminus\Delta_B}  f (w_1,\dots, w_n) d\mu^{(n)}(w_{1}, \cdots, w_{n}) = 0
  $$
  and the theorem is proved.
  \end{proof}


\section{Proofs of Theorems}
 
 \subsection{Localization of the potential}
To study the projective logarithmic potential we will localize it in the affine charts and use the previous results. Let $(\chi_j)_{0 \leq j \leq n}$ a fixed partition of unity subordinated to the covering $(\mathcal U_j)_{0 \leq j \leq n}$.
 We define $m_j := \int \chi_j d  \mu$ and  $I = I_\mu := \{j \in \{0, \cdots, n\} ; m_j \neq 0\}$.
 Then  $I \neq \emptyset$ and for $j \in I$, the measure  $\mu_j := (1 \slash m_j) \chi_j \mu$  is a probability measure on $\mathbb P^n$ supported in the chart $\mathcal U_j$ and we have a the following convex decomposition of $\mu$
 $$
 \mu = \sum_{j \in I} m_j \mu_j.
 $$
 Therefore the potential $\G_\mu (\zeta) $ can be written as
 $$
  \G_\mu (\zeta) = \sum_{j \in I} m_j  \G_{\mu_j} (\zeta), \, \, \zeta \in \Pn,
 $$
 so that we are reduced to the case of a compact measure supported in an affine chart.

 Without loss of generality we may always assume the $\mu$ is compactly supported in $\mathcal U_0$.
 Then the potential $G_\mu$ can be written as follows
 $$
 \G_\mu (\zeta) := \int_{\mathcal U_0} G (\zeta,\eta) d \mu (\eta) = (1 \slash 2)  \int_{\mathcal U_0}   \log \, \frac{\vert \zeta \wedge \eta \vert^2}{\vert \zeta \vert^{2} \vert \eta \vert^2}  \, \, d \mu (\eta).
$$
 Since we, integrate on $ \mathcal U_0$ we have $\eta_0 \neq 0$ and we can use the affine coordinates
 $$
 w := (\eta_1 \slash \eta_0, \cdots, \eta_n \slash \eta_0).
 $$
 Therefore

 $$
 \vert \zeta \wedge \eta \vert^2 =  \sum_{1 \leq j \leq n}  \vert \zeta_0 w_j - \zeta_j \vert^2 +  \sum_{1 \leq i< j \leq n} \vert \zeta_i w_j - \zeta_j w_i\vert^2.
 $$
  Since the measure $\mu$ is supported on $\mathcal U_0$, the potential $\G_\mu (\zeta)$ is smooth outside the compact set $\text{Supp} \mu$ and we are reduced to the study of the potential $\G_\mu (\zeta)$ on  the open set  $\mathcal U_0$.

  The restriction of  $G (\zeta,\eta)$ to $\mathcal U_0 \times \mathcal U_0$ can be expressed in the affine coordinates
  as follows:

  Set
  $$
  z := (\zeta_1 \slash \zeta_0, \cdots, \zeta_n \slash \zeta_0);
  $$

  Then the kernel can be written as

\begin{eqnarray} \label{eq:projlogkernel}
G (\zeta,\eta) & = & (1 \slash 2) \log \, \frac{\vert z - w \vert^2 + \vert z \wedge w\vert^2}{(1 + \vert z \vert^{2}) (1 + \vert w \vert^2)} \\
& = & (1 \slash 2) \log \, \frac{\vert z - w \vert^2 + \vert z \wedge w\vert^2}{ 1 + \vert w \vert^2} - (1 \slash 2) \log (1 + \vert z \vert^{2})\\
& = & N (z,w) - (1 \slash 2) \log (1 + \vert z \vert^{2}), \nonumber
\nonumber
\end{eqnarray}
 where
   $$
   N (z,w) := (1 \slash 2) \log \, \frac{\vert z - w \vert^2 + \vert z \wedge w\vert^2}{ 1 + \vert w \vert^2}
   $$ 
  is the projective logarithmic kernel on $\C^n$ which was studied in the previous sections.

\subsection{Proof of Theorem A}
1. From the previous localization, it follows that for each $j \in I$, we have for $\zeta \in \mathcal{U}_j$
$$
 \G_{\mu_j} (\zeta) = V_{\mu_j} (z)  - (1 \slash 2) \log (1 + \vert z \vert^{2}),
$$
where $z := {\bf z}^j (\zeta)$ are the affine coordinates of $\zeta$ in $\mathcal{U}_j$.
By Theorem 2.10 and Theorem 5.2, it follows that for each $j \in I$, the function $ \G_{\mu_j}$ is $\omega_{FS}$-plurisubharmonic and  
$\Vert \nabla G_{\mu_j}\Vert \in L^p (\mathbb P^n)$ for any $0 < p < n$.
Therefore the convex combination $\G_{\mu}  = \sum_{j \in I} \G_{\mu_j} $ also satisfies the same properties i.e. $ \G_{\mu}$ is $\omega_{FS}$-plurisubharmonic and  
$\Vert \nabla G_{\mu}\Vert \in L^p (\mathbb P^n)$ for any $0 < p < n$.

To study the complex Hessian  $(\omega +dd^c G_\mu)^k $, it is enough to localize to a small ball $B \subset \mathcal U_j \simeq \C^n$ ($0 \leq j \leq n$) such that $\mu (B) > 0$.

 Then if we set  $\mu_B := {\bf 1}_B \mu$, we have
$$
\mu = s \mu_B + (1 - s) \nu,
$$
where $ 0 < s \leq 1$ is a positive number, $\mu_B $ is a probability measure supported on $\bar B$ and $\nu$ is a probability measure supported on the complement of $B$.

Therefore we have
$$
\G_\mu =  s \G_{\mu_B} + (1 - s) \G_{\nu},
$$
where $\G_{\nu}$ is a smooth $\omega$-psh function in $B$, since the support of $\nu$ is contained in the complement of $B$.

Then working on the coordinats $z$ in the chart $\mathcal U_j \simeq \C^n$, and setting $ \ell (z) := (1 \slash 2) \log (1 + \vert z\vert^2)$, the local potential of $\omega$ in $\mathcal U_j$,  we conclude that 
$$
G_\mu + \ell = s V_{\mu_B} + (1-s) V_\nu = V_{s \mu_B} + \phi,
$$
where $\phi$ is a plurisubaharmonic function in $\C^n$ which is smooth in $B$.

By Theorem \ref{thm:abs} (5.2) it follows that $ W_B := V_{s \mu_B} + \phi \in PSH (B) \cap  W^{2,p} (B)$ for any $0 < p < n$.
Let $1 \leq k \leq n - 1$. Since for any $1 \leq i, j \leq n$,
$$
\partial^2 W_B \slash \partial z_i \partial \bar{ z}_j \in L^p (B), \, \, \forall 0 < p < n,
$$ 
it follows by H\"older inequality that the coefficients of the current $(dd^c W_B)^k$ are  in $L^r (B)$  as far as  $r \geq 1$ and $1 \slash r = k \slash p$ i.e. $r = p \slash k \geq 1$. This is possible by choosing $p$ such that $ k < p < n$ since  $1 \leq k < n$.
Therefore $(dd^c W)^k \wedge \omega^{n-k}$ is absolutely continuous w.r.t. the Lebesgue measure on $B$ with a density in $L^r (B)$ for any $ 1 < r < n \slash k$.

On the other hand 
$$
G_\mu + \ell =  V_{s \mu_B} + \phi = V^{\phi}_{s \mu_B},
$$
is a twisted logarithmic potential.
Therefore by Theorem \ref{thm:abs}, it follows that 
$$
 (\omega + dd^c G_\mu)^n = (dd^c V_{s \mu_B}^{\phi})^n,
 $$
is absolutely continuous w.r.t. the Lebesgue measure on $B$.
This proves Theorem A.

\subsection{Proof of Theorem B}

We will use Lemma \ref{lem:Rieszpotential}. By the localization principle, it is enough to assume that $\mu$ is compactly supported in a chart $\mathcal U_j \simeq \C^n$. Then in the affine coordinates we have
$$
\G_\mu (\zeta) = V_\mu (z) - (1 \slash 2) \log (1 + \vert z\vert^2),
$$
in $\C^n$. 

From (\ref{eq:gradupperbound}) we see that locally $\vert \nabla V_\mu\vert $ is dominated by the Riesz potential $J_{\mu,1}$ and using the Lemma \ref{lem:Rieszpotential} with $\alpha = 1$ we obtain the first statement of the theorem about the gradient of $V_\mu$. The H\"older continuity of $\G_\mu$ follows then by the classical lemma of Sobolev-Morrey.

The second statement is proved in the same way.
Indeed we have $\omega + dd^c \G_\mu = dd^c V_\mu,$ and by (\ref {eq:secondorderest}), the coefficients of the current $dd^c V_\mu$ are locally dominated by the Riesz potential $J_{\mu,2}$.
Again using  Lemma \ref{lem:Rieszpotential} with $\alpha = 2$, we conclude that 
$$
dd^c V_\mu \in L^p_{2,loc} (\C^n), \, \, \forall  p < \frac{2 n -s}{(2 - s)_+}.
$$
the second statement of the theorem follows by  H\"older inequality.

\bigskip

\noindent{ \bf Aknowlegements :} 
{\it This paper is dedicated to the memory of Professor Ahmed Intissar who passed away in July 2017.  He was a talented mathematician who had a big scientific influence among the mathematical community in Morocco. We are grateful to him for his generosity, mathematical rigor and all what he taught us. 

\smallskip

We would like to thank Professor Vincent Guedj for many  interesting discussions and useful suggestions on the subject of this paper. 

We also thank the referee for his careful reading. By pointing out several typos and making useful suggestions, he helped to improve the presentation of the final version of this article.}

\newpage

 \end{document}